\documentclass[11pt]{amsart}

\usepackage{amsmath,amssymb,amsthm,mathrsfs}
\usepackage{mathpazo}
\usepackage{eulervm}
\usepackage[margin=1in]{geometry}
\usepackage{hyperref}
\usepackage{enumerate}

\newtheorem{theorem}{Theorem}[section]
\newtheorem{lemma}[theorem]{Lemma}
\newtheorem{proposition}[theorem]{Proposition}
\newtheorem{corollary}[theorem]{Corollary}
\newtheorem{fact}[theorem]{Fact}
\newtheorem{condition}[theorem]{Condition}

\newtheorem*{theoremstar}{Theorem}

\theoremstyle{definition}
\newtheorem{definition}[theorem]{Definition}

\theoremstyle{remark}
\newtheorem{remark}[theorem]{Remark}

\DeclareMathOperator{\Ind}{Ind}
\DeclareMathOperator{\ind}{ind}
\DeclareMathOperator{\Hom}{Hom}
\DeclareMathOperator{\sgn}{sgn}
\DeclareMathOperator{\Tr}{Tr}

\DeclareMathOperator{\Res}{Res}
\DeclareMathOperator{\vol}{vol}
\DeclareMathOperator{\Gal}{Gal}

\newcommand{\CC}{\mathbb{C}}
\newcommand{\ZZ}{\mathbb{Z}}

\newcommand{\HH}{\mathcal{H}}
\newcommand{\AAA}{\mathcal{A}}
\newcommand{\OO}{\mathcal{O}}
\newcommand{\pp}{\mathfrak{p}}
\newcommand{\barr}[1]{\overline{#1}}

\title[Iwahori component of the Gelfand--Graev for reductive groups]{Iwahori component of the Gelfand--Graev representation \\ for reductive groups}

\author{Yi Luo}
\address{Department of Mathematics, University of Utah, Salt Lake City, UT 84112}
\email{yiluo@math.utah.edu}
\date{20 July 2026}

\begin{document}

\begin{abstract}
Let $G$ be a connected reductive group over a $p$-adic field~$F$, $U$ the unipotent radical of a minimal parabolic subgroup, $\psi$ a depth-zero non-degenerate character of $U(F)$, and $I$ an Iwahori subgroup of $G(F)$. We give a direct calculation of the $I$-fixed vectors in the Gelfand--Graev representation $\ind_U^G\psi$. The function $\mathrm{ch}_I^\psi$ supported on $U(F)I$ and normalized at the identity freely generates this space over the Bernstein subalgebra~$\AAA$ and transforms under the finite Hecke subalgebra $\HH_{W_0}$ by its sign representation~$\sgn$. Consequently, $(\ind_U^G\psi)^I \cong \HH \otimes_{\HH_{W_0}} \sgn$ as modules over the Iwahori--Hecke algebra~$\HH$. This extends the explicit construction of Chan--Savin from split groups to all connected reductive groups.
\end{abstract}

\maketitle

%%% ------------------------------------------------------------------
\section{Introduction}
%%% ------------------------------------------------------------------

Let $F$ be a finite extension of~$\mathbb{Q}_p$. We write $\OO_F$ for its ring of integers, $\varpi$ for a uniformizer, $\pp = \varpi\OO_F$ for the maximal ideal, and $k_F = \OO_F/\pp$ for the residue field, of cardinality~$q$. Let $G$ be a connected reductive group defined over~$F$. Fix a maximal $F$-split torus $S \subset G$ and let $M = Z_G(S)$ be its centralizer. Then $M$ is the Levi factor of a minimal parabolic $F$-subgroup $P = MU$ with unipotent radical~$U$, and $M$ is reductive and anisotropic modulo its center. Let $\barr{P} = M\barr{U}$ be the opposite minimal parabolic and let $W_0 = N_G(S)(F)/M(F)$ be the relative Weyl group. Let $\psi$ be a depth-zero non-degenerate character of $U(F)$ at a special vertex~$v$ in the apartment of~$S$, in the sense of Section~\ref{sec:GG}, and let $I \subset G(F)$ be the Iwahori attached to $(P,v)$ as in Section~\ref{ssec:Iwahori}.

Let $\HH = C_c(I\backslash G(F)/I)$ be the Iwahori--Hecke algebra. By the Bernstein presentation of Rostami~\cite{Rostami}, the $\CC$-algebra $\HH$ contains a commutative subalgebra $\AAA$ and a finite subalgebra $\HH_{W_0}$ indexed by the relative Weyl group. Moreover $\HH$ is free as a right $\HH_{W_0}$-module with basis the Bernstein elements $\{\Theta_\lambda : \lambda \in \Omega_M\}$, which span~$\AAA$. The algebra $\HH_{W_0}$ has a one-dimensional sign representation $\sgn \colon T_w \mapsto (-1)^{\ell(w)}$. Our main result is the following.

Let $\mathrm{ch}_I^\psi$ be the $I$-fixed function in $\ind_U^G\psi$ supported on $U(F)I$ and defined by $\mathrm{ch}_I^\psi(u i)=\psi(u)$ for $u\in U(F)$ and $i\in I$.

\begin{theoremstar}[{= Corollary~\ref{cor:main}}]
The map
\[
\HH \otimes_{\HH_{W_0}} \sgn \longrightarrow (\ind_U^G \psi)^I,
\qquad h\otimes 1\longmapsto h\cdot\mathrm{ch}_I^\psi,
\]
is an isomorphism of $\HH$-modules. In particular, $(\ind_U^G \psi)^I$ is a free $\AAA$-module of rank one generated by $\mathrm{ch}_I^\psi$.
\end{theoremstar}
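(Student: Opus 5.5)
We reduce the theorem to two statements about the generator $\mathrm{ch}_I^\psi$: (a) $\HH_{W_0}$ acts on $\mathrm{ch}_I^\psi$ through $\sgn$; (b) the elements $\Theta_\lambda\cdot\mathrm{ch}_I^\psi$, $\lambda\in\Omega_M$, form a $\CC$-basis of $(\ind_U^G\psi)^I$. Granting these, the map $h\otimes 1\mapsto h\cdot\mathrm{ch}_I^\psi$ is well defined by (a). Since $\HH$ is free as a right $\HH_{W_0}$-module on $\{\Theta_\lambda\}$, the source $\HH\otimes_{\HH_{W_0}}\sgn$ has $\CC$-basis $\{\Theta_\lambda\otimes 1\}$. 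These basis vectors go to the basis in (b), so the map is an isomorphism, and the freeness of rank one over $\AAA$ follows at once.

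\emph{Step 1: a basis of $(\ind_U^G\psi)^I$.} We use the Iwasawa-type decomposition $G(F)=\bigsqcup_{\lambda\in\Omega_M} U(F)\,m_\lambda\,W_0\, I$. Here $m_\lambda\in M(F)$ represents $\lambda\in\Omega_M=M(F)/M(F)^1$, and $W_0$ is represented in the special maximal compact $K_v$. This comes from $G=PK_v$ together with the Bruhat decomposition of $K_v$ relative to $I$. A double coset $U(F)gI$ supports a nonzero $I$-fixed vector in $\ind_U^G\psi$ iff $\psi$ is trivial on $U(F)\cap gIg^{-1}$. We check this case by case on the simple affine root subgroups, using that $\psi$ has depth zero at $v$: $\psi$ is trivial on $U\cap K_v^+$ and nontrivial on each simple root subgroup at the level of $U\cap K_v$. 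The expected outcome is that, for each $\lambda$, exactly one $w\in W_0$ gives a supporting coset. The natural guess is $w=w_0$ when $\lambda$ is antidominant (and an appropriate normalization in general). So the supports are indexed by $\Omega_M$, and we get a basis $\{f_\lambda\}$ of functions with support $U(F)m_\lambda w_\lambda I$.

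\emph{Step 2: the relation (a).} For a simple reflection $s\in W_0$ with quadratic relation $(T_s-q_s)(T_s+1)=0$, we compute $T_s\cdot\mathrm{ch}_I^\psi$ directly. We decompose $IsI=\bigsqcup_{x} x\,s\,I$, where $x$ runs over $I$ modulo $I\cap sIs^{-1}$, i.e.\ over a quotient of a root subgroup $U_{\alpha}(\OO)/U_\alpha(\pp)$ (or the nonreduced/non-split analogue from the relative root datum). Evaluating $\sum_x \mathrm{ch}_I^\psi(g x s)$ at the points of Step 1, the terms landing in $U(F)I$ contribute a character sum of $\psi$ over the residual root group. Since $\psi$ is non-degenerate, this sum is $-1$, which gives $T_s\cdot\mathrm{ch}_I^\psi=-\mathrm{ch}_I^\psi$. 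In the non-split setting the root subgroups $U_\alpha(\OO)/U_\alpha(\pp)$ are the $k_F$-points of a possibly non-abelian unipotent group, namely the group attached to $\alpha$ and $2\alpha$. We must check that $\psi$ restricted to it is a nontrivial character factoring through the $\alpha$-part, so that the orthogonality sum equals $-1$ in every case. This is where we use the precise definition of depth-zero non-degeneracy from Section~\ref{sec:GG}.

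\emph{Step 3: triangularity giving (b).} We expand $\Theta_\lambda\cdot\mathrm{ch}_I^\psi$ in the basis $f_\mu$. For $\lambda$ antidominant, $\Theta_\lambda$ is a normalized multiple of $T_{m_\lambda}$, and $T_{m_\lambda}\cdot \mathrm{ch}_I^\psi$ should be supported on $U(F)m_\lambda I$ plus smaller cells. This uses that $m_\lambda$ contracts $U\cap I$. We then handle general $\lambda$ via $\Theta_\lambda=\Theta_{\lambda_1}\Theta_{\lambda_2}^{-1}$ and Step 2. Ordering $\Omega_M$ by the dominance order on the $W_0$-orbit/length, this gives a unitriangular relation $\Theta_\lambda\cdot\mathrm{ch}_I^\psi=c_\lambda f_\lambda+\sum_{\mu<\lambda}(\ast)f_\mu$ with $c_\lambda\neq0$, hence (b).

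I expect the main obstacle to be this support/triangularity computation in Steps 1 and 3. The difficulty is pinning down exactly which $w$ survives for each $\lambda$ and controlling the leading term when $\Theta_\lambda$ involves inverses. I would first try to avoid inverses by proving that $\mathcal{A}\cdot\mathrm{ch}_I^\psi$ spans, using only antidominant $\lambda$ together with the $\HH_{W_0}$-action of Step 2. I would then get injectivity from a dimension/filtration count on the finite pieces $\bigoplus_{\lambda\in\lambda_0+\text{cone}}$.
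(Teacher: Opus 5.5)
Your reduction to (a) and (b), and the final generator-to-generator assembly, are exactly the paper's. Step~2 is also right in outline. The fact that makes the sum equal $-1$ even in the non-abelian Case~NR is the rank-one Bruhat decomposition: the map $y\mapsto$ (the $\mathsf{U}_\alpha$-component of $y\dot s$ in $\mathsf{U}_\alpha\barr{\mathsf{B}}$) is a bijection $\mathsf{U}_{-\alpha}(k_F)\smallsetminus\{1\}\to\mathsf{U}_\alpha(k_F)\smallsetminus\{1\}$. The sum is therefore $\sum_{u\neq1}\psi(u)=-1$, and the explicit $\mathrm{SL}_2$ and $\mathrm{SU}_3$ factorizations in Proposition~\ref{prop:ranksum} amount to this.

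The gap is in (b), and it starts with Step~1, whose prediction is false. Take $G=\mathrm{SL}_2$, $t_n=\mathrm{diag}(\varpi^n,\varpi^{-n})$, $x(a)$ the upper unipotent matrix, and $I$ the preimage of the lower Borel. Then $U(F)\cap t_nIt_n^{-1}=x(\pp^{2n+1})$ and $U(F)\cap t_n\dot sI\dot s^{-1}t_n^{-1}=x(\pp^{2n})$. So the cells carrying $I$-fixed vectors are $U(F)t_nI$ for $n\ge0$ and $U(F)t_n\dot sI$ for $n\ge1$. No $w$ survives for $n<0$, and both survive for $n\ge1$. For split $G$ in general, the surviving $w$ for a given $\lambda$ are either none or a set of representatives of $W_0$ modulo the stabilizer of $\lambda$.

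Consequently the expansion $\Theta_\lambda\cdot\mathrm{ch}_I^\psi=c_\lambda f_\lambda+\cdots$, with $f_\lambda$ supported on $U(F)m_\lambda w_\lambda I$, is impossible for $\lambda=t_{-n}$, $n\ge1$. Let $f_{n,1}$ and $f_{n,s}$ be the $I$-fixed functions on $U(F)t_nI$ and $U(F)t_n\dot sI$, normalized to $1$ at $t_n$ and $t_n\dot s$. One computes $T_{t_n}\cdot\mathrm{ch}_I^\psi=-f_{n,s}$ and $T_{t_{-n}}\cdot\mathrm{ch}_I^\psi=f_{n,1}$, so both land on cells of $t_n$. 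Any triangularity must therefore be organized along $W_0$-orbits.

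Beyond this, Step~3 is only asserted. For non-positive $\lambda$, $\Theta_\lambda$ involves inverses $T_\mu^{-1}$, which have no support description. Your fallback, a dimension count on the pieces $\bigoplus_{\lambda\in\lambda_0+\mathrm{cone}}$, does not work as stated, since those pieces are infinite-dimensional.

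The missing idea is the one the paper uses to get (b) for every $\lambda$ at once.
\begin{enumerate}
\item Theorem~\ref{thm:BC}, Bushnell's form of Borel--Casselman, says $V^I\to V_{\barr U}^{M_1}$ is an isomorphism of $\AAA$-modules, with $\AAA$ acting on the target through $M(F)/M_1=\Omega_M$.
\item Lemma~\ref{lem:open_cell} shows that the non-open relative Bruhat cells contribute nothing to $V_{\barr U}$: averaging over $\dot w^{-1}U_{\alpha,0}\dot w$ kills them because $\psi$ is non-trivial on $U_{\alpha,0}$. Proposition~\ref{prop:Jacquet} then gives $V_{\barr U}\cong C_c^\infty(M(F))$.
\item Hence $V^I\cong C_c(M(F)/M_1)\cong\CC[\Omega_M]$ is the regular $\AAA$-module. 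Lemma~\ref{lem:generator} gives $\mathcal J(\mathrm{ch}_I^\psi)=\mathrm{ch}_{M_1}$, which corresponds to $1$.
\end{enumerate}
That is (b), inverses included, with no cell bookkeeping. A direct cell-by-cell proof might be possible, but it would need the correct $W_0$-orbit indexing and the Bernstein relations for non-positive $\lambda$. As proposed, Steps~1 and~3 do not establish (b).
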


When $G$ is split this is the theorem of Chan--Savin~\cite{CS}. Bushnell and Henniart~\cite{BH} proved that every Bernstein component of $\ind_U^G\psi$ is finitely generated. Mishra and Pattanayak~\cite{MP} refined this for the principal-series blocks $[M,\chi]$ attached to characters $\chi$ of the minimal Levi. Whenever such a block admits a Bushnell--Kutzko type $(K,\rho)$, the $\rho$-isotypic component of $\ind_U^G\psi$ is a cyclic module over the Hecke algebra $\HH(G,\rho)$ (\cite[Theorem~1]{MP}). Such a type exists by Fintzen~\cite{Fintzen} when $G$ splits over a tamely ramified extension and $p \nmid |W|$, where $W$ is the absolute Weyl group. For \emph{split} $G$ and depth-zero generic~$\psi$ they identify this component with a sign-induced module $\HH(G,\rho)\otimes_{\HH_{W,\chi}}\sgn$ (\cite[Theorem~3]{MP}), under auxiliary hypotheses that depend on the block. For the Iwahori block $\chi = 1$ their theorem carries no such hypothesis and recovers the theorem of Chan--Savin.

A broader framework is provided by Solleveld~\cite[Theorem~2.7 and Lemma~3.1]{Sol25}, who obtains the analogous sign-induced description for the Hom-space from a Bernstein progenerator to $\ind_U^G\psi$ in every principal-series block of a quasi-split group. Solleveld and Opdam~\cite[Theorem~6.1(b), Proposition~6.2, Corollary~8.7, and Theorem~A.1]{SO26} extend this description and the resulting genericity criterion to simply generic Bernstein blocks of an arbitrary connected reductive group. Up to specialization to the relevant Iwahori blocks and the appropriate comparisons of progenerators and Hecke-algebra normalizations, their work contains the abstract sign-induced description underlying the theorem above.
Appendix~\ref{app:Sol} recovers $\mathrm{ch}_I^\psi$ from their distinguished vectors by Fourier inversion.

Our aim here is more concrete. We work directly with $(\ind_U^G\psi)^I$, identify its generator with the normalized function $\mathrm{ch}_I^\psi$, and compute the action of the Iwahori--Matsumoto generators on it. This calculation is needed in~\cite{Luo26}, where applying the spherical idempotent to $\mathrm{ch}_I^\psi$ leads to the Casselman--Shalika formula. We follow the method of Chan--Savin~\cite{CS}. Three observations make the general case as clean as the split one. First, since the minimal Levi $M = Z_G(S)$ is anisotropic modulo center, its Iwahori--Weyl group is the finitely generated abelian group $\Omega_M$ with no finite Weyl part (\cite[\S 2.9]{Rostami}). The Bernstein subalgebra $\AAA \cong \CC[\Omega_M]$ is therefore commutative even when $M$ is not a torus. The space $(\ind_U^G\psi)^I$ is a free $\AAA$-module of rank one. Second, the transformation law $T_s \cdot \mathrm{ch}_I^\psi = -\mathrm{ch}_I^\psi$ reduces to a character sum over the reductive quotient $\mathsf{G}_v = K_v/K_v^+$ of the special maximal parahoric at~$v$. By Lang's theorem the group $\mathsf{G}_v$ is quasi-split over~$k_F$. Its relative rank-one subgroups are of type $\mathrm{SL}_2$ or $\mathrm{SU}_3$, and the two character sums both equal $-1$ (Proposition~\ref{prop:ranksum}). The $\mathrm{SL}_2$ sum is the computation of Chan--Savin, and the $\mathrm{SU}_3$ sum is new. Third, a depth-zero non-degenerate $\psi$ exists for every connected reductive~$G$. Indeed, the classical equivalence between genericity and quasi-splitness concerns the unipotent radical of a \emph{Borel} subgroup of~$G$. Depth-zero non-degeneracy concerns instead the unipotent radical of a Borel subgroup of the quasi-split reductive quotient~$\mathsf{G}_v$, so the required character always exists (Lemma~\ref{lem:exists}).

The depth-zero non-degenerate characters of $U(F)$ may form several $S(F)$-orbits, and the theorem holds for each orbit. The calculation also yields a genericity criterion. A smooth representation $\pi$ generated by its $I$-fixed vectors is $\psi$-generic if and only if $\pi^I$ contains a nonzero vector transforming by~$\sgn$ (Corollary~\ref{cor:genericity}).

Throughout, $V^I$ (or $\pi^I$) denotes the $I$-fixed vectors of a smooth representation $(\pi,V)$ and $V_{\barr{U}}$ the normalized Jacquet module with respect to~$\barr{U}$. Haar measures are normalized so that $\vol(I) = 1$, $\vol(U(F)\cap I) = 1$, and $\vol(\barr{U}(F)\cap I) = 1$.

%%% ------------------------------------------------------------------
\section*{Acknowledgements}
%%% ------------------------------------------------------------------

The author thanks Professor Gordan Savin for explaining the compatible choice of the Iwahori subgroup and the character. The author also thanks Professor Maarten Solleveld for bringing his joint work with Eric Opdam to the author's attention. The author was partially supported by a C.~R.~Wiley Instructorship at the University of Utah.

%%% ------------------------------------------------------------------
\section*{Tool and computational resource disclosure}
%%% ------------------------------------------------------------------

When preparing this paper, we used AI tools at two levels. At the model level, we used the Anthropic large language models Claude Opus 4.8 and Claude Fable 5. At the agent level, we used Claude Code and a custom agent built by the author for mathematical research. These tools drafted parts of the text, revised the exposition, and checked the citations. The human author verified all mathematical content and takes full responsibility for it.

%%% ------------------------------------------------------------------
\section{Preliminaries}\label{sec:prelim}
%%% ------------------------------------------------------------------

\subsection{The relative root system}\label{ssec:structure}

Fix a maximal $F$-split torus $S \subset G$. Let $\Phi = \Phi(G,S)$ be the relative root system, $\Phi^+$ the positive system attached to~$P$, and $\Delta \subset \Phi^+$ the simple roots. For a non-divisible $\alpha \in \Phi$ let $U_\alpha$ be the relative root group. The root group $U_\alpha$ is abelian unless $2\alpha \in \Phi$, in which case it is a two-step unipotent group with central subgroup $U_{2\alpha}$ and abelian quotient $U_\alpha/U_{2\alpha}$. Let $\Phi^+_{\mathrm{nd}}$ be the set of non-divisible positive roots. The unipotent radical decomposes as $U = \prod_{\alpha \in \Phi^+_{\mathrm{nd}}} U_\alpha$ in any fixed order, and $\barr{U} = \prod_{\alpha \in \Phi^+_{\mathrm{nd}}} U_{-\alpha}$. 

The centralizer $M = Z_G(S)$ is a connected reductive group and is the Levi factor of the minimal parabolic $P = MU$. Its maximal split torus is the central torus~$S$, so $M$ is anisotropic modulo its center. Hence $M(F)$ is compact modulo the center $Z(M)(F)$. Since $M_{\mathrm{der}}$ is then anisotropic over~$F$, the reduced building of $M$ over~$F$ is a single point.

The relative Weyl group $W_0 = N_G(S)(F)/M(F)$ is a finite Coxeter group with generating set the simple reflections $s_\alpha$, $\alpha \in \Delta$.

\subsection{The Iwahori subgroup and a special vertex}\label{ssec:Iwahori}

Let $\mathcal{B}$ be the Bruhat--Tits building of the adjoint group, and let $v \in \mathcal{B}$ be a special vertex in the apartment of~$S$. When $2\alpha\in\Phi$ for a simple root~$\alpha$, we say that $\alpha$ \emph{reduces non-centrally} at~$v$ if the depth-zero quotient $U_{\alpha,0}/U_{\alpha,0+}$ of the Moy--Prasad subgroups of $U_\alpha(F)$ at~$v$ (\cite{MoyPrasad94}, recalled below) has non-zero image in the abelianization $U_\alpha/U_{2\alpha}$, rather than collapsing into $U_{2\alpha}$. We impose:

\begin{condition}\label{cond:noncentral}
Every multipliable simple root reduces non-centrally at~$v$.
\end{condition}

The condition is vacuous when $\Phi$ is reduced. A depth-zero non-degenerate character of $U(F)$ is trivial on $U_{2\alpha}\subseteq[U,U]$, so it can exist at~$v$ only under Condition~\ref{cond:noncentral}. Such a vertex exists for every connected reductive~$G$ (Lemma~\ref{lem:vertex}).

Let $K_v = \mathcal{G}_v^\circ(\OO_F)$ be the associated special maximal parahoric subgroup, where $\mathcal{G}_v^\circ$ is the parahoric group scheme associated to~$v$ (\cite[\S8]{KP}). Equivalently, by \cite[Def.~1, Prop.~3]{HRpar}, we have $K_v = G(F)_v\cap\ker\kappa_G$, where $\kappa_G$ is the Kottwitz homomorphism~\cite{Kottwitz}. Its pro-unipotent radical is $K_v^+ = G(F)_{v,0+}$ and its reductive quotient is
\[
K_v/K_v^+ = \mathsf{G}_v(k_{F}), 
\]
where $\mathsf{G}_v$ is a connected reductive group over~$k_F$ (\cite[\S 13.2.6]{KP}), discussed in Section~\ref{sec:finite}. Fix the alcove $\mathfrak{a}$ in the apartment of~$S$ whose closure contains~$v$ and which is anti-dominant for~$P$. Let~$I$ be the Iwahori subgroup associated to~$\mathfrak{a}$. Then $I \subseteq K_v$ and $K_v^+ \subseteq I$. Set $M_1 = I\cap M(F)$. By the Levi compatibility of parahoric subgroups (\cite[Cor.~9.7.3]{KP}, applied to $\Omega = \mathfrak a$ and~$\{v\}$),
\[
M_1 = I\cap M(F) = K_v\cap M(F)
\]
is the connected parahoric of~$M$ at the unique point of its reduced building.  Since $v$ is special, the image $\mathsf{S}$ of~$S$ is a maximal split torus of $\mathsf{G}_v$, and the reductive quotient $\mathsf{M}$ of $M_1$ is the centralizer
$Z_{\mathsf{G}_v}(\mathsf{S})$.   Since the maximal split torus of $M$ is the central~$S$, the group $\mathsf{M}$ is connected reductive with central maximal split torus. Since $\mathsf{M}$ is quasi-split by Lang's theorem, it must be a torus. Moreover, $\mathsf{M} = Z_{\mathsf{G}_v}(\mathsf{S})$ is a maximal torus of~$\mathsf{G}_v$, because it contains every maximal torus containing~$\mathsf{S}$. Let $\mathsf{U}$ and $\barr{\mathsf{U}}$ be the images of $U$ and $\barr{U}$, the unipotent radicals of two opposite minimal parabolic subgroups of $\mathsf{G}_v$ with common Levi~$\mathsf{M}$. Since $\mathsf{M}$ is a maximal torus, the minimal parabolics $\mathsf{B} = \mathsf{M}\mathsf{U}$ and $\barr{\mathsf{B}} = \mathsf{M}\barr{\mathsf{U}}$ are opposite Borel subgroups of~$\mathsf{G}_v$. Because $\mathfrak{a}$ is anti-dominant for~$P$, the Iwahori $I$ is precisely the preimage of the \emph{opposite} Borel $\barr{\mathsf{B}}(k_F)$ under $K_v \to \mathsf{G}_v(k_F)$.

Consider the Moy--Prasad filtration subgroups $U_{\alpha,0} = U_\alpha(F)\cap K_v$ and $U_{\alpha,0+} = U_\alpha(F)\cap K_v^+$ of $U_\alpha(F)$. We write $U_0$, $U_{0+}$, and $\barr{U}_0$ for the analogous subgroups of the full unipotent radicals. By direct spanning, $U_0 = \prod_{\alpha\in\Phi^+_{\mathrm{nd}}} U_{\alpha,0}$ in any fixed order, and likewise for $U_{0+}$ and $\barr{U}_0$. The reduction map $K_v \to \mathsf{G}_v(k_F)$ induces $U_0/U_{0+} \xrightarrow{\ \sim\ } \mathsf{U}(k_F)$. We have the Iwahori factorization (\cite[Cor.~7.4.9]{KP}):
\begin{equation}\label{eq:iwahori}
I = (I\cap U)\cdot(I\cap M)\cdot(I\cap\barr{U}) = U_{0+}\cdot M_1\cdot\barr{U}_{0}.
\end{equation}

Since $v$ is special, the reduction map $N_{K_v}(S) \to W_0$ is surjective (\cite[Prop.~9.9.1]{KP}). We fix, for each $w \in W_0$, a representative $\dot{w} \in N_{K_v}(S)$. Each $\dot{w}$ has trivial translation part in the Iwahori--Weyl group of Section~\ref{ssec:Hecke} and preserves the Moy--Prasad filtration at~$v$, so that $\dot{w}\,U_{\beta,r}\,\dot{w}^{-1} = U_{w\beta,r}$ for every relative root~$\beta$ and $r \in \{0, 0+\}$. In particular $\dot{s}_\alpha$ conjugates $U_{-\alpha,0}$ to $U_{\alpha,0}$.

Let $\pi_1(M)$ be the algebraic fundamental group of~$M$, and set
\[
\Omega_M := \pi_1(M)_{\mathrm{In}}^{\mathrm{Fr}},
\]
the coinvariants under the inertia group~$\mathrm{In}$, followed by the invariants under the Frobenius. This is a finitely generated abelian group. The Kottwitz homomorphism $\kappa_M: M(F) \to \Omega_{M}$ is surjective (\cite[Cor.~11.7.5]{KP}).

\begin{lemma}\label{lem:M1}
The kernel of $\kappa_M$ is the unique parahoric subgroup of $M(F)$, which coincides with $M_1 = I\cap M(F)$. Hence $M_1$ is normal in $M(F)$ and $\kappa_M$ induces an isomorphism
\[
M(F)/M_1 \xrightarrow{\ \sim\ } \Omega_M.
\]
\end{lemma}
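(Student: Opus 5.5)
The argument combines the Haines--Rapoport description of parahoric subgroups with the fact that the reduced building of $M$ is a point. Since $M$ is anisotropic modulo its center, its reduced building $\mathcal{B}(M)$ is a single point $x$, so $M(F)_x = M(F)$: every element of $M(F)$ fixes $x$. By \cite[Def.~1, Prop.~3]{HRpar}, applied to $M$ in place of $G$, the parahoric subgroup of $M(F)$ attached to the facet $\{x\}$ is
\[
M(F)_x \cap \ker\kappa_M = \ker\kappa_M .
\]
Since $\{x\}$ is the only facet, this is the unique parahoric subgroup of $M(F)$. In Section~\ref{ssec:Iwahori} we already saw from the Levi compatibility \cite[Cor.~9.7.3]{KP} that $M_1 = I\cap M(F) = K_v\cap M(F)$ is the connected parahoric of $M$ at this point. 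Hence $M_1 = \ker\kappa_M$.

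The remaining assertions then follow formally. Since $\kappa_M$ is a group homomorphism to an abelian group, its kernel $M_1$ is normal in $M(F)$. Since $\kappa_M$ is surjective by \cite[Cor.~11.7.5]{KP}, the first isomorphism theorem gives $M(F)/M_1 \xrightarrow{\sim} \Omega_M$.

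The only delicate step is matching the two descriptions of ``the'' parahoric of $M$: the connected parahoric $\mathcal{M}^\circ_x(\OO_F)$ produced by the Levi compatibility statement, and $\ker\kappa_M$. The Haines--Rapoport proposition identifies $\mathcal{M}_x^\circ(\OO_F)$ with $\mathrm{Fix}(x)\cap\ker\kappa_M$, where the fixer is taken in the reduced building. So I would check two things. First, the fixer of $x$ is all of $M(F)$. This holds because $M(F)$ acts on the one-point reduced building, which is legitimate since $M(F)$ is compact modulo $Z(M)(F)$ and the reduced building is defined modulo the center. Second, the parahoric at $x$ used in \cite[Cor.~9.7.3]{KP} is the same connected parahoric as in \cite{HRpar}. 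This is just the compatibility of the Kaletha--Prasad and Haines--Rapoport conventions, and both references state it explicitly.
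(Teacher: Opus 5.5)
Your proposal is correct and follows essentially the same route as the paper. Both arguments identify $\ker\kappa_M$ as the unique parahoric of $M(F)$ because the reduced building of $M$ is a point, use Levi compatibility to see that $M_1$ is a parahoric of $M(F)$, and then read off normality and the isomorphism from the surjectivity of $\kappa_M$. The only difference is that the paper cites \cite[Prop.~11.5.4]{KP} directly for $\ker\kappa_M$ being the unique parahoric, while you derive this from the Haines--Rapoport description $\mathrm{Fix}(x)\cap\ker\kappa_M$ together with $\mathrm{Fix}(x)=M(F)$.
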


\begin{proof}
On the one hand, since $M$ is anisotropic modulo center, its reduced building is a point and $M(F)$ has a \emph{unique} parahoric subgroup, namely $\ker\kappa_M$ (\cite[Prop.~11.5.4]{KP}). On the other hand, as recalled above, by the Levi compatibility of parahoric subgroups, $M_1 = I\cap M(F)$ is a parahoric subgroup of $M(F)$, hence must coincide with the unique parahoric $\ker\kappa_M$. Therefore, $M_1 = \ker\kappa_M$ is normal, and the surjection $\kappa_M$ induces the isomorphism $M(F)/M_1\xrightarrow{\ \sim\ }\Omega_M$.
\end{proof}

\subsection{The Iwahori--Hecke algebra and its Bernstein presentation}\label{ssec:Hecke}

The Bruhat decomposition $G(F) = \bigsqcup_{w\in\widetilde{W}} IwI$ holds, indexed by the Iwahori--Weyl group~$\widetilde{W} = N_G(S)(F)/M_1$ (\cite[Thm.~7.8.1]{KP}). The Iwahori--Hecke algebra $\HH = C_c(I\backslash G(F)/I)$ accordingly has $\CC$-basis the characteristic functions $T_w$ of these double cosets, $w \in \widetilde{W}$. Let $\ell \colon \widetilde{W} \to \ZZ_{\ge 0}$ be the length function and $q_w = [IwI:I]$. The Iwahori--Matsumoto relations hold:
\begin{align*}
T_{w_1}T_{w_2} &= T_{w_1 w_2} && \text{if } \ell(w_1 w_2) = \ell(w_1)+\ell(w_2),\\
(T_s - q_s)(T_s &+ 1) = 0 && \text{for each simple affine reflection } s.
\end{align*}

For $f\in\HH$ and any smooth representation $(\pi,V)$ of $G(F)$, the $I$-invariant subspace $V^I$ becomes a left $\HH$-module under the action $\pi(f)\,v = \int_{G(F)} f(g)\,\pi(g)\,v\,\mathrm{d}g$. Since $\vol(I) = 1$, we note
\begin{equation}\label{eq:Haction}
T_w\cdot v = \sum_{x\in IwI/I} \pi(x)\,v
\end{equation}
over the left cosets $xI\subseteq IwI$. When $V$ is a space of functions on which $G(F)$ acts by right translation, the above action is right convolution $\pi(f)v = v * \hat f$ with
$\hat f(g) = f(g^{-1})$. In particular, \eqref{eq:Haction} reads $(T_w\cdot f)(g) = \sum_{x} f(g\,x)$.

By~\cite[\S 2.9]{Rostami} there is a semidirect product decomposition $\widetilde{W} = \Omega_M \rtimes W_0$. The finitely generated abelian group $\Omega_M$ embeds as the normal translation subgroup of $\widetilde{W}$, and this embedding identifies $M(F)/M_1$ with $\Omega_M$ through the Kottwitz homomorphism $\kappa_M$.

\smallskip
\noindent\textbf{Bernstein presentation.} By~\cite[\S 5]{Rostami} the algebra $\HH$ contains \emph{Bernstein elements} $\Theta_\lambda$, $\lambda \in \Omega_M$, such that the assignment $\Theta_\lambda \mapsto e^\lambda$ identifies their span $\AAA = \bigoplus_{\lambda} \CC\,\Theta_\lambda$ with the group algebra $\CC[\Omega_M]$ as a $\CC$-algebra. Moreover the set $\{\Theta_\lambda\, T_w : \lambda \in \Omega_M,\, w \in W_0\}$ is a $\CC$-basis of~$\HH$. Let $\HH_{W_0}$ be the subalgebra of $\HH$ spanned by $\{T_w : w \in W_0\}$. Its generators $T_{s}$, $s = s_\alpha$ with $\alpha \in \Delta$, satisfy $(T_s - q_s)(T_s + 1) = 0$. The assignment $T_s \mapsto -1$ extends to a one-dimensional representation:
\[
\sgn \colon \HH_{W_0} \to \CC, \qquad T_w \mapsto (-1)^{\ell(w)}.
\]
This basis exhibits $\HH$ as a free right $\HH_{W_0}$-module with basis $\{\Theta_\lambda : \lambda \in \Omega_M\}$, so $\HH \otimes_{\HH_{W_0}} \sgn$ is a free $\AAA$-module of rank one generated by $1 \otimes 1$. The presentation is completed by the Bernstein relations, which express the differences $T_s\,\Theta_\lambda - \Theta_{s(\lambda)}\,T_s$, for simple reflections $s \in W_0$, as explicit elements of~$\AAA$. These relations are not needed in this paper, and we refer the reader to~\cite[\S 5.4]{Rostami} for their precise form.

\begin{remark}\label{rem:comm}
Because $M$ is anisotropic modulo center, $\Omega_M$ is abelian and $\AAA \cong \CC[\Omega_M]$ is a \emph{commutative} algebra, even when $M$ is not a torus. Since $M_1$ is normal in $M(F)$ with quotient $\Omega_M$ (Lemma~\ref{lem:M1}), once we normalize the measure such that $\vol(M_1) = 1$, the parahoric Hecke algebra $C_c(M_1\backslash M(F)/M_1)$ is nothing but the group algebra $\CC[\Omega_M]$. The assignment $\mathrm{ch}_{M_1\lambda} \mapsto \Theta_\lambda$ is an embedding of $C_c(M_1\backslash M(F)/M_1)$ into~$\HH$ (cf.\ \cite[\S 3]{Bushnell01}). It coincides with the normalized embedding of \cite[Rem.~6]{Bushnell01} attached to~$\barr{P}$. Indeed, both maps are algebra homomorphisms sending $\mathrm{ch}_{M_1\lambda}$ to $q_\lambda^{-1/2}\,T_\lambda$ for every $(\barr{P},I)$-positive $\lambda$ in the sense of \cite[\S 3.1]{Bushnell01}. Such $\lambda$ generate the group~$\Omega_M$.
\end{remark}

\medskip
\noindent\textbf{The Iwasawa decomposition and the Bruhat lemma.} Recall the Iwasawa-type decomposition
\begin{equation}\label{eq:iwasawa}
G(F) = \bigsqcup_{w\in\widetilde{W}} U(F)\,\dot{w}\,I
\end{equation}
(\cite[Thms.~5.3.3, 7.8.1]{KP}), where $\dot{w}\in N_G(S)(F)$ is any lift of~$w$. Thus a left $(U,\psi)$-equivariant, right $I$-invariant function on $G(F)$ is determined by its values at the representatives~$\dot{w}$. The next lemma records how the generators $T_s$ move these cells. Both parts are used in Section~\ref{sec:GG}.

\begin{lemma}\label{lem:bruhat}
Let $s = s_\alpha$ be a simple reflection, $\alpha\in\Delta$, and write $\barr{w}\in W_0$ for the image of $w\in\widetilde{W}$.
\begin{enumerate}[\upshape(1)]
\item \textup{(Coset decomposition.)} $\displaystyle I s I = \bigsqcup_{y} y\,\dot{s}\,I$, where $y$ runs over~$q_s$ representatives of $U_{-\alpha,0}/U_{-\alpha,0+}$.
\item \textup{(Bruhat lemma.)}
\[
U(F)\,w\,IsI = \begin{cases}
U(F)\,ws\,I & \text{if } \barr{w}(\alpha) \in \Phi^-,\\
U(F)\,ws\,I \cup U(F)\,w\,I & \text{if } \barr{w}(\alpha) \in \Phi^+.
\end{cases}
\]
\end{enumerate}
\end{lemma}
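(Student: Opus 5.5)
For part (1), I will reduce to the reductive quotient. Since $\dot{s}\in K_v$ and $I\subseteq K_v$ is the preimage of $\barr{\mathsf{B}}(k_F)$, the double coset $IsI$ is the preimage of $\barr{\mathsf{B}}\,s\,\barr{\mathsf{B}}$ in $\mathsf{G}_v(k_F)$. The Bruhat decomposition of the finite group with respect to the Borel $\barr{\mathsf{B}}$ gives
\[
\barr{\mathsf{B}}\,s\,\barr{\mathsf{B}} = \bigsqcup_{\bar y\in \barr{\mathsf{U}}_{-\alpha}(k_F)} \bar y\, s\,\barr{\mathsf{B}},
\]
where $\barr{\mathsf{U}}_{-\alpha}$ is the root subgroup of $\barr{\mathsf{U}}$ that $s$ moves out of $\barr{\mathsf{U}}$. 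Its $k_F$-points are the image of $U_{-\alpha,0}$, which is isomorphic to $U_{-\alpha,0}/U_{-\alpha,0+}$. When $2\alpha\in\Phi$, the relevant group is the full root group, including the divisible part.

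Lifting these cosets along $K_v\to\mathsf{G}_v(k_F)$, and using $K_v^+\subseteq I$, gives $IsI=\bigsqcup_y y\dot sI$. The count is $q_s=[IsI:I]$, which equals $|U_{-\alpha,0}/U_{-\alpha,0+}|$.

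For part (2), I will combine part (1) with the Iwasawa decomposition \eqref{eq:iwasawa}. Write $w=\lambda\barr w$ with translation part $\lambda\in\Omega_M$. Then
\[
U(F)\,\dot w\, IsI=\bigcup_y U(F)\,\dot w\, y\,\dot s\, I,
\]
with $y\in U_{-\alpha,0}$. The key computation is the conjugate $\dot w\,y\,\dot w^{-1}$, which lies in $U_{\barr w(-\alpha)}(F)$, the root group for the root $-\barr w(\alpha)$.

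\emph{Case $\barr w(\alpha)\in\Phi^-$.} Here $-\barr w(\alpha)$ is positive, so $\dot w y\dot w^{-1}\in U(F)$ is absorbed on the left. Every term then equals $U(F)\dot w\dot sI$, which is $U(F)\,ws\,I$.

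\emph{Case $\barr w(\alpha)\in\Phi^+$.} For $y=1$ we get $U(F)\,ws\,I$. For $y\neq 1$, I will use the rank-one identity in the group generated by $U_{\pm\alpha}$: a nontrivial $y\in U_{-\alpha,0}\setminus U_{-\alpha,0+}$ can be written as $y=u\,m\,\dot s^{-1}u'$ with $u\in U_{\alpha,0}$, $m\in M_1$ and $u'\in U_{-\alpha,0}$. In $\mathsf{G}_v$ this is the Bruhat decomposition of the rank-one Levi, lifted using $K_v^+\subseteq I$. Then
\[
\dot w\,y\,\dot s\in \dot w\,U_{\alpha,0}\,M_1\,\dot s^{-1}U_{-\alpha,0}\,\dot s\subseteq \dot w\,U_{\alpha,0}\,\dot w^{-1}\cdot \dot w\, I.
\]
This uses that $\dot s^{-1}U_{-\alpha,0}\dot s=U_{\alpha,0}$ lies in $I$ up to $U_{0+}$; I will check this against the Iwahori factorization \eqref{eq:iwahori}, adjusting $u,u'$ as needed. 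Since $\barr w(\alpha)>0$, we have $\dot wU_{\alpha,0}\dot w^{-1}\subseteq U(F)$, so these terms give $U(F)\,w\,I$. That $U(F)wI$ actually occurs follows because $U_{-\alpha,0}\setminus U_{-\alpha,0+}$ is nonempty.

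The main obstacle is this rank-one decomposition of $y$ when $2\alpha\in\Phi$. There the relevant Levi is of type $\mathrm{SU}_3$ and $y$ is a two-step unipotent element, so I will do the Bruhat decomposition in the finite rank-one group $\langle\mathsf{U}_{\pm\alpha}\rangle$ rather than by explicit matrices. I must also track the translation part $\lambda$, which shifts filtration levels when conjugating. To handle this I will only use $\dot w=\dot\lambda\dot{\barr w}$ with $\dot\lambda\in M(F)$ normalizing $U(F)$, so $\lambda$ is absorbed on the left without needing integrality.
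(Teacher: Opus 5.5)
Your part (1) is correct but argues differently from the paper. You pull the finite Bruhat decomposition $\barr{\mathsf{B}}\,\bar s\,\barr{\mathsf{B}}=\bigsqcup_{\bar y}\bar y\,\bar s\,\barr{\mathsf{B}}$ back along $K_v\to\mathsf{G}_v(k_F)$, which is legitimate because $K_v^+\subseteq I$ gives $K_v/I\cong\mathsf{G}_v(k_F)/\barr{\mathsf{B}}(k_F)$. The paper instead stays in $G(F)$ and computes $I\cap\dot sI\dot s^{-1}$ from the Iwahori factorization and $\dot s\,U_{\gamma,r}\,\dot s^{-1}=U_{s\gamma,r}$. Your outline of part (2) is also the paper's.

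The gap is in the rank-one step of part (2), which is its only non-formal point. The factorization $y=u\,m\,\dot s^{-1}u'$ with $u\in U_{\alpha,0}$, $m\in M_1$, $u'\in U_{-\alpha,0}$ does not exist, not even modulo $K_v^+$. Indeed, $\dot s^{-1}u'=(\dot s^{-1}u'\dot s)\,\dot s^{-1}$ with $\dot s^{-1}u'\dot s\in U_{\alpha,0}$, so the factorization would place the image $\bar y$ in $\mathsf{B}(k_F)\,\bar s^{-1}$. But $\bar y\in\mathsf{U}_{-\alpha}=\bar s\,\mathsf{U}_\alpha\,\bar s^{-1}\subseteq\bar s\,\mathsf{B}\,\bar s^{-1}$, and $\mathsf{B}\,\bar s^{-1}\cap\bar s\,\mathsf{B}\,\bar s^{-1}=(\mathsf{B}\cap\bar s\,\mathsf{B})\,\bar s^{-1}=\emptyset$ because $\bar s\notin\mathsf{B}$. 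In $\mathrm{SL}_2$ this is visible directly: every element of $\mathsf{B}\bar s$ has lower-right entry $0$, while $\bar y=\bigl(\begin{smallmatrix}1&0\\ \tau&1\end{smallmatrix}\bigr)$.

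Your displayed inclusion is true but useless here. Its left side equals $\dot w\,U_{\alpha,0}M_1$, since $M_1$ normalizes $U_{\alpha,0}$, and this set never contains $\dot w y\dot s$. The justification you give is also false. $U_{\alpha,0}$ is not contained in $I$, even up to $U_{0+}\subseteq I$, because $I\cap U_\alpha(F)=U_{\alpha,0+}$. In fact $U_{\alpha,0}\not\subseteq I$ is exactly why $IsI\neq I$ in part (1). So ``adjusting $u,u'$ as needed'' cannot repair this: the order of the factors is wrong, not merely their normalization.

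What is true, and what the paper uses, is a statement about $\bar y\,\bar s$ rather than $\bar y$. For $\bar y\neq 1$, the product $\bar y\,\bar s$ lies in the open cell $\mathsf{U}_\alpha\mathsf{M}\mathsf{U}_{-\alpha}\subseteq\mathsf{U}_\alpha\barr{\mathsf{B}}$. Hence $u^{-1}y\dot s\in I$ for a lift $u\in U_{\alpha,0}$ of the $\mathsf{U}_\alpha$-component, since $I$ is the full preimage of $\barr{\mathsf{B}}(k_F)$. This gives $y\dot s\in U_{\alpha,0}\,I$ directly, with no part of $U_{\alpha,0}$ needing to lie in $I$, and then $\dot w y\dot s\in(\dot wU_{\alpha,0}\dot w^{-1})\,\dot wI\subseteq U(F)\,\dot wI$.

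The paper reads off the open-cell membership from the explicit $\mathrm{SL}_2$ and $\mathrm{SU}_3$ factorizations in the proof of Proposition~\ref{prop:ranksum}. Your matrix-free plan also works once the statement is corrected. Let $\mathsf{L}=Z_{\mathsf{G}_v}(\mathsf{S}_\alpha)$ and $\mathsf{B}_{\mathsf L}=\mathsf{M}\mathsf{U}_\alpha$. Right-translating $\mathsf{L}=\mathsf{B}_{\mathsf L}\sqcup\mathsf{B}_{\mathsf L}\,\bar s\,\mathsf{B}_{\mathsf L}$ by $\bar s^{-1}$, and using $\bar s^2\in\mathsf{M}$, gives $\mathsf{L}=\mathsf{U}_\alpha\mathsf{M}\mathsf{U}_{-\alpha}\sqcup\bar s\,\mathsf{M}\mathsf{U}_{-\alpha}$. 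Moreover $\bar y\bar s\notin\bar s\,\mathsf{M}\mathsf{U}_{-\alpha}$, because $\bar s^{-1}\bar y\bar s$ is a non-trivial element of $\mathsf{U}_\alpha$. This covers Case NR without matrices.
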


\begin{proof}
(1) By~\eqref{eq:iwahori}, $I = \prod_{\beta\in\Phi^+_{\mathrm{nd}}} U_{-\beta,0}\cdot M_1\cdot \prod_{\beta\in\Phi^+_{\mathrm{nd}}} U_{\beta,0+}$.  Conjugation by $\dot{s} = \dot{s}_\alpha$ sends $U_{\gamma,r}$ to $U_{s_\alpha\gamma,r}$. It therefore carries every factor of $I$ other than $U_{-\alpha,0}$ into $I$. The factor $U_{-\alpha,0}$ is sent to $U_{\alpha,0}\not\subseteq I$. Hence $I\cap\dot{s}I\dot{s}^{-1}$ is $I$ with $U_{-\alpha,0}$ shrunk to $U_{-\alpha,0+}$, and $I = U_{-\alpha,0}\cdot(I\cap\dot{s}I\dot{s}^{-1})$, where the representatives $y$ may be taken in $U_{-\alpha,0}$, with $y\,\dot{s}\,I = y'\,\dot{s}\,I$ if and only if $y^{-1}y' \in U_{-\alpha,0+}$.

(2) By~(1), $U(F)\,w\,IsI = \bigcup_{y\in U_{-\alpha,0}} U(F)\,\dot{w}\,y\,\dot{s}\,I$. We write $\dot{w}\,y\,\dot{s} = (\dot{w}\,y\,\dot{w}^{-1})\,\dot{w}\dot{s}$ with $\dot{w}\,y\,\dot{w}^{-1}\in U_{\barr{w}(-\alpha)}(F)$. If $\barr{w}(\alpha)\in\Phi^-$, then $\barr{w}(-\alpha)\in\Phi^+$, so $\dot{w}\,y\,\dot{w}^{-1}$ is absorbed into $U(F)$ and every coset lies in $U(F)\,ws\,I$. If $\barr{w}(\alpha)\in\Phi^+$, the coset $y=1$ gives $U(F)\,ws\,I$. For $y$ in a non-trivial coset, the open-cell factorizations in the proof of Proposition~\ref{prop:ranksum} write the image of $y\,\dot{s}$ in $\mathsf{G}_v(k_F)$ as $\barr{u}\,\barr{b}$, where $\barr{u}$ is the image of some $u\in U_{\alpha,0}$ and $\barr{b}\in\barr{\mathsf{B}}(k_F)$. Then $u^{-1}\,y\,\dot{s}$ lies in the preimage $I$ of $\barr{\mathsf{B}}(k_F)$, so $y\,\dot{s}\in U_{\alpha,0}\cdot I$. Then $\dot{w}\,y\,\dot{s}\in(\dot{w}\,U_{\alpha,0}\,\dot{w}^{-1})\,\dot{w}\,I\subseteq U(F)\,\dot{w}\,I$.
\end{proof}

\subsection{The Borel--Casselman isomorphism}\label{ssec:BC}

Borel~\cite[Lem.~4.7]{Borel} proved the following for \emph{admissible} representations of semisimple groups. In the generality of smooth representations, one can invoke the Hecke-algebra localization of Bushnell. For an Iwahori subgroup the canonical map $V^I \to V_{\barr{U}}^{M_1}$ is an isomorphism, since the relevant Bernstein element acts invertibly (\cite[\S 3, Prop.~5, Thm.~1, Rem.~5--6]{Bushnell01}).

\begin{theorem}[Borel--Casselman--Bernstein, {\cite{Bushnell01}}]\label{thm:BC}
Let $(\pi,V)$ be a smooth representation of $G(F)$. The natural projection $V \to V_{\barr{U}}$ induces an isomorphism of $\AAA \cong \CC[\Omega_M]$-modules
\[
V^I \xrightarrow{\ \sim\ } V_{\barr{U}}^{M_1},
\]
where the $\AAA$-action on the right is induced by $M(F)/M_1 = \Omega_M$ in view of the embedding in Remark~\ref{rem:comm}.
\end{theorem}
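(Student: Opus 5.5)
The plan is to follow Borel's original argument, adapted to smooth representations through Bushnell's localization, in three steps.

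\textbf{Step 1: the map is well defined.} Write $p\colon V\to V_{\barr{U}}$ for the natural projection. Since $M_1$ normalizes $\barr{U}(F)$, the restriction of $p$ to $V^I$ lands in $V_{\barr{U}}^{M_1}$. By the Iwahori factorization \eqref{eq:iwahori}, $p$ restricted to $V^I$ factors through averaging over $\barr{U}_0$. The normalized Jacquet functor only rescales the $M(F)$-action by $\delta^{1/2}$, and $\delta^{1/2}$ is trivial on $M_1$, so this twist is harmless on $M_1$-invariants.

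\textbf{Step 2: equivariance for dominant translations.} Choose $\lambda\in\Omega_M$ that is $(\barr{P},I)$-positive, and let $m_\lambda\in M(F)$ lift it. Positivity means that $m_\lambda$ contracts $\barr{U}_0$ and expands $U_{0+}$ in the appropriate direction. A direct coset computation with \eqref{eq:Haction} then gives
\[
p(T_\lambda v)=q_\lambda\,\delta^{-1/2}(m_\lambda)\,\pi_{\barr{U}}(m_\lambda)\,p(v),
\]
up to the normalizations fixed in Remark~\ref{rem:comm}. For these $\lambda$, the normalized element $q_\lambda^{-1/2}T_\lambda$ is $\Theta_\lambda$, so $p$ intertwines $\Theta_\lambda$ with the action of $m_\lambda$. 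Positive $\lambda$ generate $\Omega_M$, and the map $\mathrm{ch}_{M_1\lambda}\mapsto\Theta_\lambda$ is an algebra homomorphism. Once bijectivity is established, $\AAA$-linearity therefore extends to all of $\AAA$.

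\textbf{Step 3: bijectivity.} This is the substantive part.

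For injectivity, take $v\in V^I$ with $p(v)=0$. By Jacquet's lemma there is a compact open $N\subseteq\barr{U}(F)$ with $\int_N\pi(n)v\,dn=0$. Choose $\lambda$ positive enough that $m_\lambda^{-1}Nm_\lambda\subseteq\barr{U}_0$. Then $T_\lambda^{\,k}v=0$ for $k\gg0$, by the same coset computation. The key input is that $T_\lambda$ is invertible in $\HH$, since it is a product of $T_s$'s and length-zero elements, each invertible by the quadratic relation. Hence $v=0$.

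For surjectivity, start with $\bar x\in V_{\barr{U}}^{M_1}$ and lift it to some $x\in V$. Average over $M_1\barr{U}_0$, which does not change the image in $V_{\barr{U}}$. Then apply $e_I$, the idempotent of $I$, after first translating by $m_\lambda$ for a sufficiently dominant $\lambda$ so that the $U_{0+}$-average is absorbed. By the formula in Step 2, this produces an element of $V^I$ whose image is $\delta^{*}(m_\lambda)\,m_\lambda\cdot\bar x$ for some modulus factor $\delta^{*}(m_\lambda)$. Since $m_\lambda$ acts invertibly on $V_{\barr{U}}^{M_1}$, this shows the image of $V^I$ is all of $V_{\barr{U}}^{M_1}$.

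\textbf{Main obstacle.} The hardest point is making the contraction estimates precise for non-split $M$, in which $m_\lambda$ is not a cocharacter. Here I would use Lemma~\ref{lem:M1} together with Bushnell's criterion that a strongly positive element, one contracting every $U_{-\alpha,0}$, exists and that its $T_\lambda$ is invertible. This is exactly \cite[Prop.~5, Thm.~1]{Bushnell01}, which can be invoked directly.
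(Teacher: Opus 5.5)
Your route is the one the paper relies on. The paper gives no argument of its own here: it cites Bushnell's localization, with the decisive input that the relevant Bernstein element is invertible in $\HH$. Your Steps~1--2 and your injectivity argument are the standard Borel--Bushnell mechanism behind that citation.

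\textbf{The gap: surjectivity.} As written, the surjectivity paragraph does not close. What your construction proves is this: for each $\bar x\in V_{\barr{U}}^{M_1}$ there is a positive $\lambda$ with $m_\lambda\cdot\bar x\in p(V^I)$. That $\lambda$ depends on $\bar x$, through the subgroup $U'\subseteq U_{0+}$ fixing a lift $x'$. The invertibility of $m_\lambda$ on $V_{\barr{U}}^{M_1}$ is automatic and does not turn this into surjectivity. To reach a given $\bar y$ you would run the construction on $m_\lambda^{-1}\cdot\bar y$. But its natural lift $\pi(m_\lambda^{-1})x'$ is fixed only by the smaller group $m_\lambda^{-1}U'm_\lambda$, which forces a correspondingly larger $\lambda$, so the argument is circular. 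In particular, this paragraph never uses the invertibility of $T_\lambda$, which is the Iwahori-specific input.

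\textbf{The fix.} The missing step is that $p(V^I)$ is stable under $m_\lambda^{-1}$ for positive $\lambda$. This follows from your Step~2 together with the invertibility of $T_\lambda$ in $\HH$. For $v\in V^I$ put $w=T_\lambda^{-1}v\in V^I$. Then $p(v)=c_\lambda\,m_\lambda\cdot p(w)$ with $c_\lambda\neq 0$, so $m_\lambda^{-1}\cdot p(v)\in p(V^I)$. Therefore $\bar x=m_\lambda^{-1}\cdot(m_\lambda\cdot\bar x)\in p(V^I)$.

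Equivalently, Step~2 together with $\Theta_{-\lambda}=\Theta_\lambda^{-1}$ already gives $\AAA$-equivariance of $p$ on all of $\AAA$ before bijectivity is known. So $p(V^I)$ is an $\AAA$-submodule, and your construction then yields surjectivity. Thus the invertibility of $T_\lambda$ must be used twice, once for injectivity and once for surjectivity. This is the content of the paper's phrase ``since the relevant Bernstein element acts invertibly.'' In Borel's admissible setting this step is closed instead by the finite-dimensionality of $V_{\barr{U}}^{M_1}$: $m_\lambda$ restricts to an injective, hence bijective, endomorphism of the $m_\lambda$-stable subspace $p(V^I)$.

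\textbf{Two smaller points.}
\begin{enumerate}
\item With your convention $m_\lambda\barr{U}_0m_\lambda^{-1}\subseteq\barr{U}_0$, the coset computation gives $T_\lambda v$ as a nonzero multiple of $\pi(m_\lambda)\int_{m_\lambda^{-1}\barr{U}_0m_\lambda}\pi(n)v\,dn$. So in the injectivity step you need $m_\lambda N m_\lambda^{-1}\subseteq\barr{U}_0$, not $m_\lambda^{-1}Nm_\lambda\subseteq\barr{U}_0$.
\item Your ``main obstacle'' is mild. Strongly positive elements can be taken to be $\mu(\varpi)$ for a suitable cocharacter $\mu$ of the central split torus $S$ of $M$.
\end{enumerate}
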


%%% ------------------------------------------------------------------
\section{The reductive quotient at a special vertex}\label{sec:finite}
%%% ------------------------------------------------------------------

Recall the reductive quotient $\mathsf{G}_v(k_F) = K_v/K_v^+$, where $\mathsf{G}_v$ is a connected reductive group over the finite field~$k_F$, in particular quasi-split by Lang's theorem~\cite{Lang}.

Write $\mathsf{S}$ for the image of the maximal split torus~$S$ under the reduction $K_v\to\mathsf{G}_v(k_F)$. Since $v$ is special, $\mathsf{S}$ is a maximal split torus of $\mathsf{G}_v$. Moreover, the vertex $v$ is special precisely when the Weyl group of the relative root system $\Phi(\mathsf{G}_v,\mathsf{S})$ is the full relative Weyl group~$W_0$, so the simple reflections of $W_0$ correspond to the simple roots of $\mathsf{G}_v$. Nevertheless, the relative root system $\Phi(\mathsf{G}_v,\mathsf{S})$ need not coincide with $\Phi(G,S)$. For $\alpha\in\Delta$, let $\mathsf{U}_{\pm\alpha} = U_{\pm\alpha,0}/U_{\pm\alpha,0+}$ be the images in $\mathsf{G}_v$ of the depth-zero root groups. These filtration quotients are the root groups of $\mathsf{G}_v$ associated to the simple roots.

Let $\mathsf{S}_\alpha = (\ker\alpha)^\circ\subseteq\mathsf{S}$ be the codimension-one subtorus on which $\alpha$ vanishes. By the \emph{rank-one subgroup of $\mathsf{G}_v$ attached to~$s_\alpha$} we mean the derived group of the centralizer $Z_{\mathsf{G}_v}(\mathsf{S}_\alpha)$, equivalently the subgroup $\langle\mathsf{U}_\alpha,\mathsf{U}_{-\alpha}\rangle$ generated by the two opposite root groups. The character sum of Section~\ref{ssec:generator} is computed inside it.

We record what we have established in this discussion and in Section~\ref{ssec:Iwahori}.

\begin{fact}\label{fact:finite}
The reductive quotient $\mathsf{G}_v$ is connected reductive and quasi-split over~$k_F$, with relative Weyl group $W_0$. The image of $U$ is the unipotent radical $\mathsf{U}$ of the Borel $\mathsf{B} = \mathsf{M}\mathsf{U}$, and the image of $I$ is the opposite Borel $\barr{\mathsf{B}} = \mathsf{M}\barr{\mathsf{U}}$. For each simple root $\alpha \in \Delta$, with corresponding simple reflection $s_\alpha \in W_0$, the reduction map identifies $U_{\pm\alpha,0}/U_{\pm\alpha,0+}$ with the root groups of the rank-one subgroup of $\mathsf{G}_v$ attached to $s_\alpha$.
\end{fact}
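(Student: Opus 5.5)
The statement collects facts already established in Section~\ref{ssec:Iwahori} and in the opening discussion of this section, so the proof is largely a matter of assembling them. I would proceed in three steps.

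\emph{Step 1: $\mathsf{G}_v$ is connected, reductive and quasi-split, with relative Weyl group $W_0$.} Connectedness and reductivity come from the construction of the reductive quotient of the connected parahoric group scheme $\mathcal{G}_v^\circ$ \cite[\S13.2.6]{KP}. Quasi-splitness over the finite field $k_F$ follows from Lang's theorem. Since $v$ is special, the image $\mathsf{S}$ of $S$ is a maximal split torus. The surjection $N_{K_v}(S)\to W_0$ \cite[Prop.~9.9.1]{KP}, reduced modulo $K_v^+$, shows that the relative Weyl group of $(\mathsf{G}_v,\mathsf{S})$ is $W_0$. The natural map $N_{K_v}(S)\to N_{\mathsf{G}_v}(\mathsf{S})(k_F)$ sends $M_1$ into $\mathsf{M}$, so it induces a map $W_0\to W(\mathsf{G}_v,\mathsf{S})$.
\begin{itemize}
\item Injectivity: an element acting trivially on $\mathsf{S}$ acts trivially on $X_*(S)=X_*(\mathsf{S})$, so it lies in $M(F)$.
\item Surjectivity: this is the characterization of special vertices, namely that the vertex $v$ is special precisely when the Weyl group of the relative root system $\Phi(\mathsf{G}_v,\mathsf{S})$ is the full relative Weyl group $W_0$.
\end{itemize}

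\emph{Step 2: the Borel subgroups.} Section~\ref{ssec:Iwahori} already shows that $\mathsf{M}=Z_{\mathsf{G}_v}(\mathsf{S})$ is a maximal torus. Hence $\mathsf{B}=\mathsf{M}\mathsf{U}$ and $\barr{\mathsf{B}}=\mathsf{M}\barr{\mathsf{U}}$ are opposite Borel subgroups. The isomorphism $U_0/U_{0+}\cong\mathsf{U}(k_F)$ identifies the image of $U$ (more precisely of $U_0=U(F)\cap K_v$) with $\mathsf{U}(k_F)$. The image of $I$ is then read off from the Iwahori factorization \eqref{eq:iwahori}: the factor $U_{0+}$ maps to the identity, $M_1$ maps onto $\mathsf{M}(k_F)$, and $\barr{U}_0$ maps onto $\barr{\mathsf{U}}(k_F)$. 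So the image of $I$ is $\barr{\mathsf{B}}(k_F)$, which is consistent with the anti-dominance of $\mathfrak{a}$.

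\emph{Step 3: the simple root groups.} Let $\alpha\in\Delta$. The quotient $U_{\alpha,0}/U_{\alpha,0+}$ is a subgroup of $\mathsf{U}(k_F)$, and it is normalized by $\mathsf{S}$, which acts on it through the restriction of the affine root. I would argue that it is exactly the root subgroup $\mathsf{U}_{\alpha'}$ for the simple root $\alpha'$ of $(\mathsf{G}_v,\mathsf{S})$ proportional to $\alpha$.
\begin{itemize}
\item The roots of $\mathsf{G}_v$ are the gradients of affine roots vanishing at $v$. The simple ones correspond to the simple reflections of $W_0$ via Step 1.
\item Condition~\ref{cond:noncentral} ensures that, when $2\alpha\in\Phi$, the depth-zero quotient is non-trivial in the $\alpha$-direction rather than collapsing into $U_{2\alpha}$. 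This is what prevents the quotient from being only a $2\alpha$-root group of $\mathsf{G}_v$.
\item Applying $\dot s_\alpha$, which preserves the filtration, exchanges $U_{\alpha,0}$ and $U_{-\alpha,0}$, giving the opposite root group.
\end{itemize}
Together these two opposite root groups generate the rank-one subgroup attached to $s_\alpha$.

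I expect Step 3 to be the main obstacle, specifically matching the multipliable and non-reduced cases: one must check that $U_{\alpha,0}/U_{\alpha,0+}$ is the full root group of $\mathsf{G}_v$, not a proper piece of it. My first approach would be to compare the $\mathsf{S}$-weights that occur on the quotient, using the Bruhat--Tits description of the filtration quotients via the affine roots at $v$ \cite{KP}.
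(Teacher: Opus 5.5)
Your proposal is correct and follows the paper's route. The paper treats Fact~\ref{fact:finite} purely as a record of the preceding discussion: Lang's theorem, the characterization of special vertices by $W(\Phi(\mathsf{G}_v,\mathsf{S}))=W_0$, $\mathsf{M}$ being a maximal torus, and anti-dominance of $\mathfrak a$. It also takes the identification of $U_{\pm\alpha,0}/U_{\pm\alpha,0+}$ with the simple root groups of $\mathsf{G}_v$ as standard Bruhat--Tits input, which is exactly what you flag in Step~3.

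Your added details are fine: the injectivity of $W_0\to W(\mathsf{G}_v,\mathsf{S})$, and reading off the image of $I$ from \eqref{eq:iwahori}. One small correction: Condition~\ref{cond:noncentral} is not needed for the Fact itself. It only decides whether the root group attached to $s_\alpha$ has weight $\barr{\alpha}$ or $2\barr{\alpha}$, and that is the separate content of Lemma~\ref{lem:vertex}.
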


The rank-one subgroup attached to a simple reflection $s_\alpha$ is of one of exactly two types. Its simply connected central cover is $\Res_{k_\alpha/k_F}G'$, where $k_\alpha$ is a finite extension of $k_F$ and $G'$ is absolutely simple. The group $G'$ is either $\mathrm{SL}_2$, of type $A_1$, or the quasi-split special unitary group $\mathrm{SU}_3$ of type ${}^2A_2$. In the latter case $\mathrm{SU}_3$ is attached to a quadratic extension $k_\alpha'/k_\alpha$ of finite fields and its relative root system is the non-reduced $BC_1$.

\smallskip
\noindent\textbf{Case R:} $G' = \mathrm{SL}_2$, so the rank-one subgroup is a central quotient of $\Res_{k_\alpha/k_F}\mathrm{SL}_2$ and its positive root group $\mathsf{U}_\alpha(k_F)$ is the additive group of~$k_\alpha$.

\smallskip
\noindent\textbf{Case NR:} $G' = \mathrm{SU}_3$, so the rank-one subgroup is a central quotient of $\Res_{k_\alpha/k_F}\mathrm{SU}_3$ and its positive root group $\mathsf{U}_\alpha(k_F)$ is a finite Heisenberg-type group.

\begin{remark}\label{rem:ramified}
The case is determined by $\mathsf{G}_v$ and not by whether $2\alpha \in \Phi = \Phi(G, S)$. Case NR requires a non-trivial quadratic extension of residue fields. Both cases occur for the quasi-split unitary groups $G = \mathrm{SU}_{2m+1}$ attached to a separable quadratic extension $E/F$, with relative root system $BC_m$ and one multipliable simple root~$\alpha$. If $E/F$ is \emph{unramified}, the residue extension $k_E/k_F$ is quadratic and the hyperspecial fiber $\mathsf{G}_v$ is the finite unitary group of type ${}^2A_{2m}$. The reflection $s_\alpha$ is of Case NR. If $E/F$ is \emph{ramified}, the residue extension is trivial. For odd residue characteristic, the two special vertices (the extreme vertices of the type-$BC_m$ affine diagram) then have reduced special fibers (\cite[Ex.~15.2.26]{KP}): the split $\mathrm{SO}_{2m+1}$ (type $B_m$) and $\mathrm{Sp}_{2m}$ (type $C_m$). At the $\mathrm{SO}_{2m+1}$ vertex $\alpha$ reduces to the short root of $B_m$ and $U_{2\alpha}$ reduces trivially ($U_{2\alpha,0} = U_{2\alpha,0+}$). There $s_\alpha$ is of Case R even though $2\alpha \in \Phi$. At the $\mathrm{Sp}_{2m}$ vertex $\alpha$ reduces to the long root of $C_m$ and its depth-zero reduction collapses into $U_{2\alpha}$, so that vertex violates Condition~\ref{cond:noncentral}.
\end{remark}

\begin{lemma}\label{lem:vertex}
For every connected reductive $G$ there is a special vertex $v$ in the apartment of~$S$ satisfying Condition~\ref{cond:noncentral}.  At such a vertex the reduction of Fact~\ref{fact:finite} identifies $U_{\alpha,0}/U_{\alpha,0+}$ with the root group of $\mathsf{G}_v$ of weight $\barr{\alpha}$ (rather than $2\barr{\alpha}$) for each multipliable simple~$\alpha$ with reduction $\barr{\alpha}$.
\end{lemma}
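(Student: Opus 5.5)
We reduce to the relative local Dynkin diagram and treat each irreducible factor separately. The statement concerns only the multipliable simple roots, which exist only in components of type $BC_m$ of $\Phi$. Condition~\ref{cond:noncentral} is local to each multipliable simple root. A special vertex of $G$ in the apartment of~$S$ is a product of special vertices for the simple factors of the adjoint group. It therefore suffices to treat an $F$-simple adjoint group, which by restriction of scalars we may take absolutely simple over a finite extension. If $\Phi$ is reduced there is nothing to prove, so assume $\Phi$ is of type $BC_m$ with multipliable simple root~$\alpha$.

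The plan is to use the Bruhat--Tits description of the affine root system at the level of the single root pair $(\alpha,2\alpha)$. By~\cite[\S\S6--9]{KP}, the affine functions attached to $\alpha$ and $2\alpha$ are $\alpha+\Gamma'_\alpha$ and $2\alpha+\Gamma'_{2\alpha}$. The jumps of the filtration $U_{\alpha,r}$ occur at values where either $U_{\alpha,r}/U_{\alpha,r+}\cdot U_{2\alpha}$ is nontrivial, which is the \emph{non-central} case, or only $U_{2\alpha,2r}$ jumps, which is the \emph{central} case. Among the special vertices of the affine diagram (the two extremities in the local Tits index), I will show that at least one lies on a hyperplane $\alpha(x)+c=0$ with $c\in\Gamma'_\alpha$ such that $\alpha+c$ is an affine root whose gradient is $\alpha$ and not merely half of an affine root with gradient~$2\alpha$.

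Concretely, I would go through the classification of quasi-split-over-$F^{\mathrm{ur}}$ residual data (Tits' tables of local indices for types with non-reduced relative root system). In the unramified ${}^2A_{2m}$ case, the hyperspecial vertex has $\Gamma'_\alpha=\ZZ$ and $\Gamma'_{2\alpha}=\ZZ+\tfrac12$ shifts, so $\alpha$ reduces to the $\barr\alpha$ root group of $\mathrm{SU}_3$, giving Case NR. In the ramified cases, Remark~\ref{rem:ramified} shows that exactly one of the two extreme vertices, the $\mathrm{SO}_{2m+1}$-type vertex, is non-central. In every case the vertex $v$ is chosen so that the affine root $\alpha$ (with $\alpha(v)$ a jump of $U_\alpha$) is in the affine root system. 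Pass to the general anisotropic-kernel case by noting that the $(\alpha,2\alpha)$ data depend only on the rank-one Levi $Z_G(S_\alpha)$. I would then apply the rank-one computation in~\cite[\S\S 7.6--7.7]{KP} for quasi-split-over-$F^{\mathrm{ur}}$ descent.

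For the second assertion, Condition~\ref{cond:noncentral} says the image of $U_{\alpha,0}/U_{\alpha,0+}$ in $U_\alpha/U_{2\alpha}$ is nonzero. The $\mathsf S$-weights on this image are the restriction of $\alpha$, i.e.\ $\barr\alpha$, while $U_{2\alpha}$ contributes weight~$2\barr\alpha$. Hence the root group of $\mathsf G_v$ that equals the reduction, by Fact~\ref{fact:finite}, has weight $\barr\alpha$, possibly with its centre of weight $2\barr\alpha$ in Case NR. The main obstacle is the existence step in the presence of a nontrivial anisotropic kernel. There the reduction to a case-by-case check of local indices is the honest route, and I would first try to avoid it by arguing directly from the valuation on the root datum. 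Shifting $v$ by the half-step between consecutive special vertices along $\alpha^\vee$ swaps the central and non-central behaviours, so one of the two extreme vertices always works.
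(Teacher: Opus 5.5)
\textbf{Gap: the existence step is not proved.} Your second assertion is fine and matches the paper: the $\mathsf{S}$-weight on the non-zero image in $U_\alpha/U_{2\alpha}$ is $\barr{\alpha}$. You also correctly identify the criterion. The root $\alpha$ reduces non-centrally at $v$ exactly when $v$ lies on the zero hyperplane of an affine root $\alpha+c$ with $c\in\Gamma'_\alpha$, that is, with gradient exactly $\alpha$.

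The existence half, however, is the real content of the lemma, and your proposal does not establish it.
\begin{enumerate}
\item Your case check covers only the quasi-split odd unitary groups. For the ramified ones it relies on Remark~\ref{rem:ramified}, which assumes odd residue characteristic.
\item Non-reduced relative root systems also occur for non-quasi-split groups, such as hermitian or skew-hermitian forms over division algebras with non-trivial anisotropic kernel. They also occur in residue characteristic~$2$. Your check says nothing about these.
\item The step ``pass to the general case because the $(\alpha,2\alpha)$ data depend only on $Z_G(S_\alpha)$'' does not by itself show that some \emph{special vertex of $G$} lies on an $\alpha$-wall rather than only on $2\alpha$-walls.
\item Your closing claim, that a half-step shift swaps central and non-central behaviour so that one of the two extreme vertices always works, is asserted without argument. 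It is precisely the statement to be proved.
\end{enumerate}

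\textbf{The missing idea: no classification is needed.}
\begin{enumerate}
\item For each simple root $\alpha$ of the component there is an affine root with gradient exactly $\alpha$. For multipliable $\alpha$ this is just $\Gamma'_\alpha\neq\emptyset$, since the filtration induced on $U_\alpha/U_{2\alpha}$ has jumps.
\item The simple roots form a basis of the dual of the adjoint apartment. So you may choose one such hyperplane for each simple root independently and let $v$ be their common point.
\item The gradients of the affine roots vanishing at $v$ form a root system, since it is stable under the reflections in those affine roots.
\item This root system contains $\Delta$, hence is $W_0$-stable and contains every root direction. So $v$ is special.
\item By construction $v$ lies on an $\alpha_0$-wall for the multipliable simple root $\alpha_0$, which is Condition~\ref{cond:noncentral}.
\end{enumerate}
In short, instead of starting from the special vertices and testing which one is non-central, start from the non-central hyperplanes and observe that their intersection is automatically special. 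This is the paper's argument, and it is uniform in $G$ and in the residue characteristic.
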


\begin{proof}
When $\Phi$ is reduced the condition is empty and any special vertex serves. Assume $\Phi$ non-reduced. Each non-reduced ($BC$) component of $\Phi$ contributes exactly one multipliable simple root, the short end root. The apartment of~$S$ in the adjoint building is the product of the component apartments, so the components may be treated independently. Fix a non-reduced component, with multipliable simple root~$\alpha_0$, and work in its apartment.

For every simple root $\alpha$ of the component there is an affine root with derivative exactly~$\alpha$ and not~$2\alpha$ (\cite[\S\S 1.2, 1.6]{Tits}). Let $h_\alpha$ be its vanishing hyperplane. Since the simple roots of the component form a basis of the dual of the apartment, the hyperplanes $\{h_\alpha\}_\alpha$ meet in a single point~$v$.

The derivatives of the affine roots vanishing at~$v$ form a root system (\cite[Prop.~9.4.23]{KP}). It contains every simple root of the component by construction, hence is stable under the component's Weyl group and contains every root direction, so $v$ is special. Moreover $v$ lies on~$h_{\alpha_0}$, so an affine root with derivative~$\alpha_0$ vanishes at~$v$. By the definition of the affine roots with derivative~$\alpha_0$ (\cite[\S 1.6]{Tits}) the image of $U_{\alpha_0,0}$ in $U_{\alpha_0}/U_{2\alpha_0}$ then strictly contains that of $U_{\alpha_0,0+}$, that is, the root $\alpha_0$ reduces non-centrally at~$v$. Taking a special vertex in every reduced component and the point just constructed in every non-reduced component yields the asserted special vertex of the full apartment.

For the final assertion, Fact~\ref{fact:finite} identifies $U_{\alpha,0}/U_{\alpha,0+}$ with a root group of $\mathsf{G}_v$, whose $\mathsf{S}$-weight is either $\barr{\alpha}$ or $2\barr{\alpha}$ (\cite[Prop.~9.4.23]{KP}). Non-central reduction gives $U_{\alpha,0}/U_{\alpha,0+}$ a non-zero image in $U_\alpha/U_{2\alpha}$, on which $S$ acts through~$\alpha$, so the weight is~$\barr{\alpha}$.
\end{proof}

%%% ------------------------------------------------------------------
\section{The Gelfand--Graev representation}\label{sec:GG}
%%% ------------------------------------------------------------------

\subsection{Depth-zero non-degenerate characters}\label{ssec:psi}

\begin{definition}\label{def:psi}
Let $\psi \colon U(F) \to \CC^\times$ be a smooth character.
\begin{enumerate}[\upshape(1)]
\item We call $\psi$ \emph{non-degenerate} if, for each simple root $\alpha\in\Delta$, it is non-trivial on $U_\alpha(F)$.
\item We call $\psi$ \emph{depth-zero non-degenerate at~$v$} if it is trivial on $U_{0+} = I\cap U$ and, for each simple root $\alpha \in \Delta$, its restriction to $U_{\alpha,0}$ is non-trivial on the quotient $U_{\alpha,0}/U_{\alpha,0+}$.
\end{enumerate}
\end{definition}

In particular, a depth-zero non-degenerate character is non-degenerate. Proposition~\ref{prop:depthzero} provides the converse at a vertex adapted to~$\psi$.

A character of $U(F)$ factors through the abelianization of $U(F)$ in characteristic zero (\cite[Thm.~4.1, Cor.~5.3]{BHder}):
\begin{equation}\label{eq:abel}
U(F)/[U(F),U(F)] \;\cong\; \prod_{\alpha\in\Delta} U_\alpha(F)/U_{2\alpha}(F).
\end{equation} 
Thus a character of $U(F)$ is determined by its restrictions to the factors of~\eqref{eq:abel}. By Fact~\ref{fact:finite}, the restriction of a depth-zero non-degenerate $\psi$ to $U_0$ descends to a non-degenerate character of $\mathsf{U}(k_F)$, again denoted $\psi$, non-trivial on each simple root group $\mathsf{U}_\alpha(k_F)$. Only this reduction enters the $I$-fixed computation below.

Existence and the central-stabilizer characterization below are standard for the Gelfand--Graev representation (cf.~\cite{BH}), and the stabilizer condition in (1) is the definition of non-degeneracy in~\cite{MP}. We record the \emph{depth-zero} realization at the vertex of Lemma~\ref{lem:vertex}.

\begin{lemma}\label{lem:exists}
\begin{enumerate}[\upshape(1)]
\item A smooth character of $U(F)$ is non-degenerate if and only if its stabilizer in $S(F)$ is contained in $Z(G)$.
\item A depth-zero non-degenerate character of $U(F)$ exists for every connected reductive group~$G$, and its stabilizer in $S(F)$ is central.
\end{enumerate}
\end{lemma}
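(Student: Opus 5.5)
Both parts rest on the abelianization \eqref{eq:abel}: a smooth character $\psi$ of $U(F)$ is the same as a tuple of characters $\psi_\alpha$ of $U_\alpha(F)/U_{2\alpha}(F)$, one for each $\alpha\in\Delta$. The action of $S(F)$ respects this decomposition. For $t\in S(F)$ and $u\in U_\alpha(F)$ we have $({}^t\psi)(u)=\psi(t^{-1}ut)$, and conjugation by $t$ acts on the $F$-vector space $U_\alpha/U_{2\alpha}$ through the scalar $\alpha(t)$ (or its inverse). So ${}^t\psi=\psi$ if and only if, for every $\alpha\in\Delta$, $\psi_\alpha$ is invariant under scaling by $\alpha(t)$.

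\emph{Part (1).} Suppose $\psi$ is non-degenerate, so each $\psi_\alpha$ is nontrivial. Then $\psi_\alpha(x)=\psi_0(\langle\ell_\alpha,x\rangle)$ for a fixed additive character $\psi_0$ and a nonzero functional $\ell_\alpha$ on $U_\alpha/U_{2\alpha}$, viewed as a vector space over $F$ or over the relevant finite extension. Invariance under scaling by $c=\alpha(t)$ means $\ell_\alpha(cx)-\ell_\alpha(x)\in\ker\psi_0$ for all $x$. Since $\ell_\alpha$ is nonzero and $F$-linear, its image is all of $F$, so $(c-1)F\subseteq\ker\psi_0$, which forces $c=1$. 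Hence the stabilizer is $\bigcap_{\alpha\in\Delta}\ker\alpha\cap S(F)$. An element of $S$ killed by every simple root acts trivially on every root group and centralizes $M=Z_G(S)$, so it is central in $G$; this is the standard fact that $Z(G)\cap S=\bigcap_{\alpha\in\Delta}\ker\alpha$. Conversely, suppose $\psi_\alpha$ is trivial for some $\alpha$. Then every $t\in S(F)$ with $\beta(t)=1$ for all $\beta\in\Delta\setminus\{\alpha\}$ stabilizes $\psi$. Because the simple roots are linearly independent characters of $S$, such $t$ exist with $\alpha(t)\neq1$, and such a $t$ is noncentral. This gives the equivalence.

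\emph{Part (2).} Take $v$ as in Lemma~\ref{lem:vertex}. By Fact~\ref{fact:finite} and Lemma~\ref{lem:vertex}, $U_0/U_{0+}\cong\mathsf{U}(k_F)$, and the abelianization of $\mathsf{U}(k_F)$ is $\prod_{\alpha\in\Delta}\mathsf{U}_\alpha(k_F)/\mathsf{U}_{2\alpha}(k_F)$. By Condition~\ref{cond:noncentral} each factor is nonzero, since the weight is $\barr{\alpha}$. Choose a character $\bar\psi$ of $\mathsf{U}(k_F)$ that is nontrivial on every factor.

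The main obstacle is extending $\bar\psi$, pulled back to $U_0$, to a character of $U(F)$ trivial on $U_{0+}$. To do this, work factorwise in \eqref{eq:abel}. Let $A_\alpha=U_\alpha(F)/U_{2\alpha}(F)$, an $F$-vector space. It contains the compact open lattices $L_0\supsetneq L_{0+}$, namely the images of $U_{\alpha,0}$ and $U_{\alpha,0+}$. The character $\bar\psi_\alpha$ is a character of $L_0/L_{0+}$. Extend it first to $L_0$ trivially on $L_{0+}$, and then to all of $A_\alpha$ using the injectivity (divisibility) of $\CC^\times$, i.e.\ Pontryagin duality for the locally compact abelian group $A_\alpha$ and its open subgroup $L_0$. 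Take the product over $\alpha\in\Delta$ to get $\psi$.

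It remains to check that $\psi$ is trivial on all of $U_{0+}$, including the non-simple root factors. These factors lie in $[U,U]$, since $\psi$ factors through the abelianization. More precisely, $U_{0+}$ maps into $\prod_\alpha L_{0+}$ under the abelianization map, because the map is compatible with the product decomposition $U_{0+}=\prod U_{\beta,0+}$. On the simple factors $\psi$ restricts to $\bar\psi_\alpha\neq1$ on $U_{\alpha,0}/U_{\alpha,0+}$, so $\psi$ is depth-zero non-degenerate at $v$. It is non-degenerate, so by (1) its stabilizer in $S(F)$ is central.
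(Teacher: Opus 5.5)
Your proof is correct. Part (2) follows essentially the paper's route. For each $\alpha\in\Delta$ you choose a character of $U_\alpha(F)/U_{2\alpha}(F)$ that is trivial on the image of $U_{\alpha,0+}$ and non-trivial on the image of $U_{\alpha,0}$. This is possible because the two images differ by Condition~\ref{cond:noncentral}, and the character extends by Pontryagin duality since the image of $U_{\alpha,0}$ is open. You then assemble these characters via \eqref{eq:abel}, check triviality on $U_{0+}$ using the non-simple factors, and invoke~(1).

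Part (1) is where you take a genuinely different route. The paper simply cites the Proposition of Bushnell--Henniart \cite[\S 1.2]{BH}. You instead prove it directly:
\begin{itemize}
\item $t\in S(F)$ acts on the $F$-vector space $U_\alpha(F)/U_{2\alpha}(F)$ by the scalar $\alpha(t)$.
\item A non-trivial character $\psi_0\circ\ell_\alpha$ is invariant under scaling by $c$ only if $c=1$.
\item Hence the stabilizer of a non-degenerate $\psi$ is exactly $S(F)\cap\bigcap_{\alpha\in\Delta}\ker\alpha=S(F)\cap Z(G)$.
\item For a degenerate $\psi$, a value $\lambda(\varpi)$ of a cocharacter orthogonal to $\Delta\setminus\{\alpha\}$ is a non-central element of the stabilizer.
\end{itemize}
Your version is self-contained and yields the stabilizer exactly, not just containment in $Z(G)$. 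In exchange, it uses two structural inputs: the canonical linear structure on the vector group $U_\alpha/U_{2\alpha}$ in characteristic zero, with $S$ acting through $\alpha$, and the fact that $G$ is generated by $M$ and the root groups. The citation buys brevity.

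One side remark on your part (2). You assert that the abelianization of $\mathsf{U}(k_F)$ is $\prod_{\alpha\in\Delta}\mathsf{U}_\alpha(k_F)/\mathsf{U}_{2\alpha}(k_F)$. This can fail for small residue fields in bad characteristic, and the paper imposes no hypothesis on the residue characteristic. For example, if $\mathsf{G}_v=\mathrm{Sp}_4$ over $\mathbb{F}_2$, the abelianization of $\mathsf{U}(\mathbb{F}_2)$ has order $8$, not $4$. Nothing in your argument depends on this. The quotient of $\mathsf{U}$ by the normal subgroup generated by the root groups of height $\ge 2$ still surjects onto that product, so $\bar\psi$ exists. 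More to the point, you only ever use a non-trivial character of each finite group $L_0/L_{0+}$, which exists directly because $L_0\supsetneq L_{0+}$. The detour through a character $\bar\psi$ of $\mathsf{U}(k_F)$ can therefore be dropped, which leaves exactly the paper's argument.
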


\begin{proof}
(1) This is the equivalence of conditions (1) and (3) in the Proposition of \cite[\S 1.2]{BH}, whose definition of non-degeneracy agrees verbatim with Definition~\ref{def:psi}(1).

(2) By~\eqref{eq:abel} it suffices to choose, for each $\alpha \in \Delta$, a smooth character $\chi_\alpha$ of $U_\alpha(F)/U_{2\alpha}(F)$ trivial on the image of $U_{\alpha,0+}$ and non-trivial on the image of $U_{\alpha,0}$. These images are distinct by Condition~\ref{cond:noncentral} (automatically so when $2\alpha\notin\Phi$), so such a $\chi_\alpha$ exists by Pontryagin duality, since $U_{\alpha,0}$ is compact open. The resulting character of $U(F)$ is depth-zero non-degenerate, hence non-degenerate, and its stabilizer is central by~(1).
\end{proof}

\begin{proposition}\label{prop:depthzero}
Every non-degenerate character $\psi$ of $U(F)$ is depth-zero non-degenerate at a special vertex $v_\psi$ of the apartment of~$S$ satisfying Condition~\ref{cond:noncentral}. In particular Corollary~\ref{cor:main} applies to every non-degenerate~$\psi$.
\end{proposition}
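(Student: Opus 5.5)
Given a non-degenerate $\psi$, I would construct $v_\psi$ by \emph{translating} the vertex $v$ of Lemma~\ref{lem:vertex} by an element of the apartment chosen so that the conductors of $\psi$ on the simple root groups line up with depth zero. The affine structure makes this possible because the simple roots are linearly independent.

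\textbf{Step 1 (conductors).} For each $\alpha\in\Delta$ and each point $x$ of the apartment, consider the filtration $U_{\alpha,x,r}$ on $U_\alpha(F)$. Restrict $\psi$ to $U_\alpha(F)$; by \eqref{eq:abel} it factors through $U_\alpha(F)/U_{2\alpha}(F)$, and it is non-trivial there by non-degeneracy. The affine roots with derivative $\alpha$ (not $2\alpha$) have values on $x$ that form a discrete set $\alpha(x)+c+\Gamma_\alpha$, where $\Gamma_\alpha$ is a discrete subgroup of $\mathbb{R}$ by \cite[\S 1.6]{Tits}. Since $\psi|_{U_\alpha}$ is smooth and non-trivial, there is a largest filtration step on the image in $U_\alpha/U_{2\alpha}$ on which $\psi$ is non-trivial. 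Equivalently, there is a unique affine root $a_\alpha$ with derivative $\alpha$ such that $\psi$ is trivial on $U_{a_\alpha+}$ (the next step) but not on $U_{a_\alpha}$. The images of these filtration steps in $U_\alpha/U_{2\alpha}$ are exhausting and separated, which gives existence and uniqueness.

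\textbf{Step 2 (choice of point).} Since $\Delta$ is a basis of the dual of the apartment of the adjoint group (componentwise), the hyperplanes $\{a_\alpha=0\}$ for $\alpha\in\Delta$ meet in a single point $v_\psi$, exactly as in the proof of Lemma~\ref{lem:vertex}. The derivatives of affine roots vanishing at $v_\psi$ include every simple root, hence, by \cite[Prop.~9.4.23]{KP}, form a root system stable under $W_0$ containing all directions. So $v_\psi$ is a special vertex. Because $a_\alpha$ has derivative $\alpha$ rather than $2\alpha$ and vanishes at $v_\psi$, the multipliable simple roots reduce non-centrally there, which is Condition~\ref{cond:noncentral}.

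\textbf{Step 3 (depth zero).} At $v_\psi$ we have $U_{\alpha,0}\supseteq U_{a_\alpha}$ and $U_{\alpha,0+}$ maps into $U_{a_\alpha+}$ modulo $U_{2\alpha}$. Thus $\psi$ is non-trivial on $U_{\alpha,0}/U_{\alpha,0+}$ and trivial on $U_{\alpha,0+}$. It remains to show that $\psi$ is trivial on all of $U_{0+}=\prod_\beta U_{\beta,0+}$. For non-simple $\beta$ and for $U_{2\alpha}$, the factors lie in $[U,U]$ by \eqref{eq:abel}, so $\psi$ kills them; I would spell this out using the product decomposition of $U_{0+}$ in any order. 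For simple $\alpha$, the factor $U_{\alpha,0+}$ is killed by Step~2.

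The main obstacle is Step~1's claim that $\psi$ restricted to the image of $U_{\alpha}$ in $U_\alpha/U_{2\alpha}$ has a well-defined "conductor" given by an affine root with derivative $\alpha$ and not $2\alpha$. In the non-reduced case the filtration on $U_\alpha/U_{2\alpha}$ jumps only at values of affine roots of derivative $\alpha$, and one must check that the jump detected by $\psi$ occurs there. I would verify this via Tits' description of the values of $\varphi_\alpha$ on $U_\alpha\setminus U_{2\alpha}$ \cite[\S 1.6]{Tits}. The final assertion then follows by applying Corollary~\ref{cor:main} with $v=v_\psi$.
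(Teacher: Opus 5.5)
Your proposal is correct and is essentially the paper's proof: your conductor affine root $a_\alpha$ is exactly the affine root whose zero locus is the paper's hyperplane $h_\alpha$, the locus where $\psi_\alpha$ has depth exactly zero, located using the fact that the filtration on $U_\alpha/U_{2\alpha}$ jumps only at affine roots of derivative $\alpha$ \cite[\S 1.6]{Tits}; both arguments then intersect these hyperplanes as in Lemma~\ref{lem:vertex} and use \eqref{eq:abel} to kill the non-simple root groups. The opening idea of ``translating the vertex $v$'' is never used in your Steps 1--3 and can be dropped.
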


\begin{proof}
Fix $\alpha\in\Delta$ and let $x$ vary in the apartment of~$S$ in the adjoint building. The restriction $\psi_\alpha$ of $\psi$ to $U_\alpha(F)/U_{2\alpha}(F)$ is a non-trivial smooth character, so it is trivial on the image of the Moy--Prasad subgroup $U_\alpha(F)_{x,r}$ for $r$ large and non-trivial for $r$ small. These images depend on~$x$ only through $\langle\alpha,x\rangle$, and for fixed~$x$ they change only when $r$ crosses the value at~$x$ of an affine root with derivative~$\alpha$ (\cite[\S 1.6]{Tits}). Hence the locus of those~$x$ at which $\psi_\alpha$ is non-trivial on the image of $U_\alpha(F)_{x,0}$ and trivial on the image of $U_\alpha(F)_{x,0+}$ is a single affine root hyperplane~$h_\alpha$.

By the intersection argument in the proof of Lemma~\ref{lem:vertex}, applied componentwise, $v_\psi := \bigcap_{\alpha\in\Delta}h_\alpha$ is a special vertex satisfying Condition~\ref{cond:noncentral}. The character $\psi$ is depth-zero non-degenerate at~$v_\psi$. Indeed, for each $\alpha\in\Delta$ it is non-trivial on $U_{\alpha,0}/U_{\alpha,0+}$ and trivial on $U_{\alpha,0+}$ because $v_\psi$ lies on~$h_\alpha$. It is trivial on the non-simple positive root groups by~\eqref{eq:abel}, hence on $U_{0+}$. Corollary~\ref{cor:main} is proved at an arbitrary special vertex satisfying Condition~\ref{cond:noncentral}, so it applies with $v = v_\psi$.
\end{proof}

The \emph{Gelfand--Graev representation} is $V = \ind_U^G \psi$, the space of smooth functions $f$ on $G(F)$ with $f(ug) = \psi(u)f(g)$ for all $u \in U(F)$, $g \in G(F)$, compactly supported modulo~$U(F)$.

\subsection{The Jacquet module}\label{ssec:Jacquet}

The computation of $V^I$ in this subsection and the next follows Chan--Savin closely. Indeed, Lemma~\ref{lem:open_cell}, Proposition~\ref{prop:Jacquet} and Lemma~\ref{lem:generator} are the relative forms of \cite[Lem.~4.1, Prop.~4.2, Lem.~4.3]{CS} respectively. The dictionary replaces the maximal split torus of the split case by the minimal Levi~$M$, the Bruhat cells of the Borel subgroup by the relative Bruhat cells of~$P$, and the hyperspecial vertex by the special vertex of Section~\ref{ssec:Iwahori}. The new point is Case~NR in the proof of Proposition~\ref{prop:ranksum}, a character sum in a finite special unitary group with no split counterpart.

Consider the relative Bruhat decomposition, in the opposite-parabolic form
\[
G(F) = \bigsqcup_{w \in W_0} U(F)\,\dot{w}\,M(F)\,\barr{U}(F)
= \bigsqcup_{w \in W_0} P(F)\,\dot{w}\,\barr{P}(F),
\]
whose open cell, at $w = 1$, is $U(F)M(F)\barr{U}(F)$. For $w \in W_0$ set $X_w = U(F)\,\dot{w}\,M(F)\,\barr{U}(F)$ and let $V_r$ be the subspace of $V$ of functions supported on $\bigcup_{\ell(w)\le r} X_w$. The closure of $X_w$ meets only cells of length at least $\ell(w)$, so $\bigcup_{\ell(w)\le r}X_w$ is open and each $V_r$ is a $\barr{P}(F)$-submodule of~$V$. Let $V_w$ be the space of smooth left $(U,\psi)$-equivariant functions on $X_w$, compactly supported modulo~$U(F)$.

\begin{lemma}\label{lem:open_cell}
The inclusion $V_0 \hookrightarrow V$ induces an isomorphism $(V_0)_{\barr{U}} \xrightarrow{\sim} V_{\barr{U}}$ of $M(F)$-modules.
\end{lemma}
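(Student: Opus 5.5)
We prove that $(V_r/V_{r-1})_{\barr{U}} = 0$ for every $r \ge 1$. Granting this, exactness of the Jacquet functor together with the finite filtration $V_0 \subseteq V_1 \subseteq \cdots \subseteq V_{\ell(w_0)} = V$ gives isomorphisms $(V_{r-1})_{\barr{U}} \xrightarrow{\sim} (V_r)_{\barr{U}}$ by induction on $r$. These compose to the claimed isomorphism $(V_0)_{\barr{U}} \xrightarrow{\sim} V_{\barr{U}}$. The maps are induced by inclusions of $\barr{P}(F)$-modules, so they are $M(F)$-equivariant.

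First we identify the graded pieces. Since $\bigcup_{\ell(w)\le r}X_w$ is open and the cells of length exactly $r$ are closed in it and pairwise disjoint, restriction of functions gives an isomorphism of $\barr{P}(F)$-modules
\[
V_r/V_{r-1} \cong \bigoplus_{\ell(w)=r} V_w .
\]
This is the standard exact sequence of compactly supported smooth functions for an open subset and its closed complement, applied to $U(F)\backslash G(F)$. It therefore suffices to show $(V_w)_{\barr{U}}=0$ for every $w \neq 1$.

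Next we describe $V_w$ explicitly. Write $X_w = U(F)\,\dot w\,M(F)\,\barr U(F)$. Conjugating, $\dot w^{-1}U(F)\dot w \cap \barr P(F) = \dot w^{-1}U(F)\dot w \cap \barr U(F)$ is the product of the root groups $U_{-\beta}$ with $\beta>0$ and $w^{-1}$-image negative; call this group $\barr U_w$. So $X_w \cong U(F)\times^{U(F)\cap \dot w \barr U_w \dot w^{-1}} \dot w\,\barr P(F)$. A function $f\in V_w$ is then determined by $\phi(\bar p)=f(\dot w\bar p)$ on $\barr P(F)$. It satisfies $\phi(\bar u\bar p)=\psi(\dot w\bar u\dot w^{-1})\phi(\bar p)$ for $\bar u\in\barr U_w$, and is compactly supported modulo $\barr U_w$. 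Hence $V_w \cong \ind_{\barr U_w}^{\barr P(F)} \psi^w$, where $\psi^w(\bar u)=\psi(\dot w\bar u\dot w^{-1})$.

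The key step, and the main obstacle, is to show that this induced module has vanishing $\barr U$-coinvariants when $w\neq1$. Since $w\neq 1$, there is a simple root $\alpha$ with $w(\alpha)<0$. The corresponding group $\barr U_w$ contains $\dot w^{-1}U_{\gamma}\dot w$ for $\gamma=-w(\alpha)$, which lies in $U_{-\alpha}(F)\subseteq\barr U(F)$. Hmm, non-degeneracy is needed for a root $\gamma$ that is simple with $w^{-1}\gamma<0$, which exists since $w^{-1}\neq1$. Then $\psi^w$ is non-trivial on $\dot w^{-1}U_\gamma(F)\dot w = U_{w^{-1}\gamma}(F)\subseteq \barr U_w$.

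We use the criterion that a smooth $\barr U(F)$-module $E$ has $E_{\barr U}=0$ iff each vector is killed by integration over some compact open subgroup. Restrict $V_w$ to $\barr U(F)$ and decompose $\barr P(F)=\barr U(F)M(F)$ (Mackey). As a $\barr U(F)$-module, $V_w$ is a space of compactly supported functions in the $M(F)$-variable with values in $\ind_{m\barr U_w m^{-1}}^{\barr U(F)}\psi^{w,m}$. It thus suffices to show that each $\ind_{\barr U_w}^{\barr U(F)}\chi$, with $\chi=\psi^w(m\cdot m^{-1})$ non-trivial on the root subgroup $U_{w^{-1}\gamma}$, has zero coinvariants, uniformly over compact sets of $m$.

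The non-trivial character $\chi$ of a subgroup containing a whole root group forces $\int_{\barr U_c}$ over a large compact open subgroup $\barr U_c$ to annihilate any fixed compactly supported vector. One route is to filter $\barr U(F)$ by root-group order so that $U_{w^{-1}\gamma}$ becomes central in a suitable quotient, and then use that a non-trivial character integrated over a large compact open subgroup of $U_{w^{-1}\gamma}(F)$ vanishes. Smoothness and compact support in $m$ give the needed uniformity, since the relevant conjugates $mU_{w^{-1}\gamma,c}m^{-1}$ stay within a bounded range. We expect this non-abelian bookkeeping, including the two-step root groups when $2\gamma\in\Phi$ (handled via~\eqref{eq:abel}, since $\psi$ is trivial on $U_{2\gamma}$), to be the technical heart. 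It follows the template of \cite[Lem.~4.1]{CS} with $M$ in place of the torus.
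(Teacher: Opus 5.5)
Your reduction matches the paper's: the filtration by the $V_r$, exactness of the Jacquet functor, and the reduction to $(V_w)_{\barr U}=0$ for $w\neq 1$ using a simple root $\gamma$ with $w^{-1}\gamma<0$. Drop your first attempt with $\gamma=-w(\alpha)$: that root need not be simple, and non-degeneracy says nothing about $\psi$ on it.

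However, the vanishing of $(V_w)_{\barr U}$ is the whole content of the lemma, and you do not prove it. You assert that integration over a large compact open subgroup kills each vector. You sketch a route in which a filtration of $\barr U(F)$ makes $U_{w^{-1}\gamma}$ central in some quotient. You then say you ``expect'' this bookkeeping to be the technical heart. As written this is a genuine gap. The route you sketch is also harder than necessary, because it tries to move a root-group element past an arbitrary element of $\barr U(F)$.

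The missing idea is an absorption trick that removes any need for centrality. Fix $f\in V_w$. Its support lies in $U(F)\,\dot w\,C\,\barr U_f$ with $C\subset M(F)$ and $\barr U_f\subset\barr U(F)$ compact. Put $A=\dot w^{-1}U_{\gamma,0}\,\dot w\subseteq\barr U(F)$. Choose a compact open subgroup $\barr U_1\supseteq\barr U_f$ with $m\barr U_1 m^{-1}\supseteq A$ for all $m\in C$; this is where compactness of $C$ gives the uniformity you wanted.

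If $x\barr U_1$ meets the support, then $x=u\,\dot w\,m\,\bar u_0$ with $u\in U(F)$, $m\in C$ and $\bar u_0\in\barr U_1$. The factor $\bar u_0$ is absorbed by invariance of the integral over the group $\barr U_1$, and $u$ comes out as $\psi(u)$. Conjugating the variable by $m$, it then suffices to show $\int_{m\barr U_1 m^{-1}} f(\dot w\,\bar u'\,m)\,d\bar u'=0$.

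This integral is unchanged under the left translations $\bar u'\mapsto a\bar u'$ for $a\in A\subseteq m\barr U_1m^{-1}$. On the other hand, equivariance gives $f(\dot w\,a\,\bar u'\,m)=\psi(\dot w a\dot w^{-1})\,f(\dot w\,\bar u'\,m)$. Averaging over $A$ therefore multiplies the integral by $\int_{U_{\gamma,0}}\psi=0$.

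No ordering of root groups is needed, and the two-step case $2\gamma\in\Phi$ causes no extra trouble. The restriction of $\psi$ to $U_{\gamma,0}$ is a non-trivial character of a compact group, abelian or not, so its integral vanishes.
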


\begin{proof}
For $r \ge 1$, restriction of functions to the cells of length~$r$ gives a short exact sequence of $\barr{P}(F)$-modules
\[
0 \to V_{r-1} \to V_r \to \bigoplus_{\ell(w)=r} V_w \to 0.
\]
The Jacquet functor is exact, so it suffices to prove $(V_w)_{\barr{U}} = 0$ for $\ell(w) > 0$. For this it suffices to find, for each $f \in V_w$, a compact open subgroup $\barr{U}_1 \subseteq \barr{U}(F)$ with
\[
\int_{\barr{U}_1} f(x\,\barr{u})\,d\barr{u} = 0 \qquad\text{for all } x \in X_w.
\]

Fix $w$ with $\ell(w) > 0$ and $f \in V_w$. Choose a simple root $\alpha \in \Delta$ with $w^{-1}(\alpha) \in \Phi^-$. Then $A := \dot{w}^{-1}\,U_{\alpha,0}\,\dot{w}$ is a compact subgroup of $\barr{U}(F)$. The support of $f$ is compact modulo $U(F)$, hence contained in $U(F)\,\dot{w}\,C\,\barr{U}_f$ with $C \subset M(F)$ and $\barr{U}_f \subset \barr{U}(F)$ both compact. Choose a compact open subgroup $\barr{U}_1 \subseteq \barr{U}(F)$ containing $\barr{U}_f$ with $m\,\barr{U}_1\,m^{-1} \supseteq A$ for every $m \in C$, which is possible since $A$ and $C$ are compact.

Now, we show that the integral of $f(x\,\barr{u})$ over $\barr{u}\in\barr{U}_1$ vanishes for every $x \in X_w$. The integral is zero unless $x\,\barr{U}_1$ meets the support of~$f$, in which case $x = u\,\dot{w}\,m\,\barr{u}_0$ with $u \in U(F)$, $m \in C$, $\barr{u}_0 \in \barr{U}_1$. In view of the $(U,\psi)$-equivariance of $f$ and the substitution $\barr{u} \mapsto \barr{u}_0\,\barr{u}$, it reduces to the vanishing of the integral for $x = \dot{w}\,m$. Conjugating the variable by~$m$ replaces the domain by $m\barr{U}_1 m^{-1}$ and the integrand by $f(\dot{w}\,\barr{u}'\,m)$, up to some constant. The resulting integral is unchanged under the substitutions $\barr{u}' \mapsto a\,\barr{u}'$ for $a \in A$, a subgroup of $m\barr{U}_1 m^{-1}$, so it equals its own average over $a \in A$. Since $\dot{w}\,a\,\dot{w}^{-1} \in U_{\alpha,0} \subseteq U(F)$, equivariance gives $f(\dot{w}\,a\,\barr{u}'\,m) = \psi(\dot{w}\,a\,\dot{w}^{-1})\,f(\dot{w}\,\barr{u}'\,m)$, so averaging over $a$ first multiplies the integral by the average of $\psi$ over $U_{\alpha,0}$, which is zero because $\psi$ is non-trivial on the compact group $U_{\alpha,0}$ (Definition~\ref{def:psi}(2)). Hence the integral vanishes and $(V_w)_{\barr{U}} = 0$.
\end{proof}

\begin{proposition}\label{prop:Jacquet}
There is an isomorphism of $M(F)$-modules $\mathcal{J} \colon V_{\barr{U}} \xrightarrow{\sim} C_c^\infty(M(F))$.
\end{proposition}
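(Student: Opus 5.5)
By Lemma~\ref{lem:open_cell} it suffices to compute $(V_0)_{\barr{U}}$, where $V_0$ consists of the functions in $V$ supported on the open cell $X_1 = U(F)M(F)\barr{U}(F)$. The product map $U \times M \times \barr{U} \to G$ is an isomorphism of varieties onto the big cell. Hence restriction to $M(F)\barr{U}(F)$ identifies $V_0$ with $C_c^\infty(M(F)\barr{U}(F)) \cong C_c^\infty(M(F)\times \barr{U}(F))$. Under this identification $f \mapsto \phi(m,\barr u) = f(m\barr u)$, and $f$ is recovered by $f(u m \barr u) = \psi(u)\phi(m,\barr u)$. The action of $\barr{P}(F) = M(F)\barr{U}(F)$ is by right translation. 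An element $\barr u_1\in\barr U(F)$ acts by $\phi(m,\barr u)\mapsto \phi(m,\barr u\barr u_1)$. An element $m_1\in M(F)$ acts by $\phi(m,\barr u)\mapsto \phi(m m_1, m_1^{-1}\barr u m_1)$.

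Define $\mathcal{J}$ on $V_0$ by integrating along $\barr U$:
\[
\mathcal{J}(f)(m) = \delta(m)^{?}\int_{\barr{U}(F)} f(m\,\barr u)\,d\barr u,
\]
where the power of the modulus character is chosen to match the normalization of the Jacquet module. A right translate by $\barr u_1$ has the same integral, so $\mathcal{J}$ factors through $(V_0)_{\barr U}$. The change of variables $\barr u\mapsto m_1\barr u m_1^{-1}$ produces a modulus factor, and the normalizing twist absorbs it. With that twist, $\mathcal{J}$ intertwines the $M(F)$-action on $(V_0)_{\barr U}$ with right translation on $C_c^\infty(M(F))$. Surjectivity is clear: take $\phi = h\otimes \mathrm{ch}_{\barr U_c}/\vol(\barr U_c)$ for a compact open $\barr U_c$.

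The main step is injectivity. We must show that every $\phi\in C_c^\infty(M\times\barr U)$ with $\int_{\barr U}\phi(m,\barr u)\,d\barr u = 0$ for all $m$ lies in the span of the functions $\phi - \barr u_1\cdot\phi$. We first reduce to a finite sum $\phi=\sum \mathrm{ch}_{m_iM_c}\otimes g_i$, with $m_iM_c$ disjoint and each $g_i\in C_c^\infty(\barr U)$ satisfying $\int g_i=0$. This reduction uses smoothness: $\phi$ is invariant under some $M_c\times\barr U_c$ and has compact support. The problem is then the standard fact that $C_c^\infty(\barr U(F))_{\barr U(F)}\cong\CC$ via the integral. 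Its proof uses that $\barr U(F)$ is an exhaustive union of compact open subgroups. Each $g_i$ is supported in one such subgroup $\barr U_N$, and there a function of integral zero is a combination of differences of translates, since $\barr U_N$ acts transitively on the cosets of $\barr U_c$.

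I expect no serious obstacle. The only care needed is the bookkeeping of the modulus character, so that $\mathcal J$ is $M(F)$-equivariant for the normalized Jacquet module. Composing with the isomorphism of Lemma~\ref{lem:open_cell} then gives $V_{\barr U}\cong C_c^\infty(M(F))$.
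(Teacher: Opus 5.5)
Your proposal is correct and follows the paper's proof essentially step for step: reduce to $(V_0)_{\barr{U}}$ by Lemma~\ref{lem:open_cell}, identify $V_0$ with $C_c^\infty(M(F)\times\barr{U}(F))$ via the open cell, and integrate over $\barr{U}(F)$ with the modulus twist, which the paper fixes as $\delta_{\barr{P}}(m)^{1/2}$ (with $\vol(\barr{U}\cap I)=1$). Your injectivity argument, reducing to $C_c^\infty(\barr{U}(F))_{\barr{U}(F)}\cong\CC$, spells out what the paper compresses into the remark that integration in the $\barr{u}$-variable computes the coinvariants.
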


\begin{proof}
By Lemma~\ref{lem:open_cell} it suffices to treat $(V_0)_{\barr{U}}$. The multiplication map $U(F)\times M(F)\times\barr{U}(F) \to G(F)$ is a homeomorphism onto the open cell, so every $f \in V_0$ satisfies $f(u\,m\,\barr{u}) = \psi(u)\,f(m\,\barr{u})$. Restriction to $M(F)\,\barr{U}(F)$ then identifies $V_0$ with $C_c^\infty(M(F)\times\barr{U}(F))$. Under this identification $\barr{U}(F)$ acts by right translation in the variable $\barr{u}$, so integration in that variable computes the coinvariants. With the measure on $\barr{U}(F)$ normalized by $\vol(\barr{U}\cap I) = 1$, the map
\[
\mathcal{J}(f)(m) = \delta_{\barr{P}}(m)^{1/2}\int_{\barr{U}(F)} f(m\barr{u})\,d\barr{u}
\]
descends to a linear isomorphism $(V_0)_{\barr{U}} \xrightarrow{\ \sim\ } C_c^\infty(M(F))$. The factor $\delta_{\barr{P}}^{1/2}$ makes it $M(F)$-equivariant for the normalized action on the source and right translation on the target.
\end{proof}

Combining Theorem~\ref{thm:BC} and Proposition~\ref{prop:Jacquet},
\[
V^I \cong V_{\barr{U}}^{M_1} \cong C_c^\infty(M(F))^{M_1}
\cong C_c(M(F)/M_1) \cong \CC[\Omega_M],
\]
where the last identification is induced by $M(F)/M_1 \xrightarrow{\ \sim\ } \Omega_M$ (Lemma~\ref{lem:M1}), under which $\AAA = \CC[\Omega_M]$ acts by translation. Hence $V^I$ is a free $\AAA$-module of rank one. Let $\mathrm{ch}_{M_1} \in C_c^\infty(M(F))$ be the characteristic function of~$M_1$. It corresponds to $1 \in \CC[\Omega_M]$ and generates this $\AAA$-module.

\subsection{The generator and its transformation law}\label{ssec:generator}

\begin{lemma}\label{lem:generator}
Let $\mathrm{ch}_I^\psi$ be the function on $G(F)$ supported on $U(F)\cdot(I\cap\barr{P})$ defined by $\mathrm{ch}_I^\psi(u\,i) = \psi(u)$ for $u \in U(F)$, $i \in I\cap\barr{P}$. Then:
\begin{enumerate}[\upshape(1)]
\item $\mathrm{ch}_I^\psi \in V_0 \cap V^I$.
\item $\mathcal{J}(\mathrm{ch}_I^\psi) = \mathrm{ch}_{M_1}$.
\end{enumerate}
\end{lemma}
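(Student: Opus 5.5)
The plan is to work entirely with the Iwahori factorization \eqref{eq:iwahori}, $I = U_{0+}\cdot M_1\cdot\barr{U}_0$, and the fact that $I\cap\barr{P} = M_1\barr{U}_0$. First I check that $\mathrm{ch}_I^\psi$ is well defined. Suppose $u\,i = u'\,i'$ with $u,u'\in U(F)$ and $i,i'\in I\cap\barr{P}$. Then $u'^{-1}u = i' i^{-1}\in U(F)\cap\barr{P}(F) = \{1\}$, so the function is unambiguous. It is left $(U,\psi)$-equivariant by construction. Its support $U(F)M_1\barr{U}_0$ lies in the open cell $U(F)M(F)\barr{U}(F)$, so it belongs to $V_0$. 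It is compactly supported modulo $U(F)$ and locally constant, since $M_1\barr{U}_0$ is compact open in $M(F)\barr{U}(F)$ and the multiplication map onto the open cell is a homeomorphism.

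For right $I$-invariance, I note that $\psi$ is trivial on $U_{0+}$ by Definition~\ref{def:psi}(2). This gives the identity
\[
U(F)\cdot(I\cap\barr{P}) = U(F)\,U_{0+}M_1\barr{U}_0 = U(F)\,I .
\]
For $g = u\,u_+ m\barr{u}\in U(F)I$, the two descriptions agree: $\mathrm{ch}_I^\psi(g) = \psi(uu_+) = \psi(u)$. Hence $\mathrm{ch}_I^\psi$ is exactly the function supported on $U(F)I$ with $\mathrm{ch}_I^\psi(ui) = \psi(u)$.

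Right translation by $j\in I$ preserves $U(F)I$. Writing $g = ui$, we get $\mathrm{ch}_I^\psi(gj) = \mathrm{ch}_I^\psi(u\,(ij)) = \psi(u) = \mathrm{ch}_I^\psi(g)$. Outside $U(F)I$ both sides vanish. The only care needed here is the well-definedness of the formula $\psi(u)$ on $U(F)I$. If $ui = u'i'$, then $u'^{-1}u\in U(F)\cap I = U_{0+}$, on which $\psi$ is trivial. This proves (1).

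For (2), I evaluate $\mathcal{J}(\mathrm{ch}_I^\psi)(m) = \delta_{\barr{P}}(m)^{1/2}\int_{\barr{U}(F)}\mathrm{ch}_I^\psi(m\barr{u})\,d\barr{u}$. The point $m\barr{u}$ lies in the open cell with trivial $U$-component. By uniqueness of the factorization $U(F)\times M(F)\times\barr{U}(F)$, it lies in the support $U(F)M_1\barr{U}_0$ if and only if $m\in M_1$ and $\barr{u}\in\barr{U}_0$, and then the value is $\psi(1) = 1$. Hence the integral equals $\mathrm{ch}_{M_1}(m)\,\vol(\barr{U}_0)$, and $\vol(\barr{U}_0) = \vol(\barr{U}(F)\cap I) = 1$ by the normalization. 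Finally, $\delta_{\barr{P}}$ is a positive character trivial on the compact group $M_1$, so the modulus factor equals $1$ on the support. This gives $\mathcal{J}(\mathrm{ch}_I^\psi) = \mathrm{ch}_{M_1}$.

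There is no real obstacle. The step needing most attention is the identification $U(F)(I\cap\barr{P}) = U(F)I$ together with the triviality of $\psi$ on $U(F)\cap I = U_{0+}$. Both are immediate from \eqref{eq:iwahori} and the depth-zero hypothesis.
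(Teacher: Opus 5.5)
Your proof is correct and follows essentially the same route as the paper. Well-definedness comes from $U(F)\cap\barr{P}(F)=\{1\}$, right $I$-invariance from the Iwahori factorization together with $\psi|_{U_{0+}}=1$, and part (2) from uniqueness of the open-cell factorization together with $\vol(\barr{U}_0)=1$ and $\delta_{\barr{P}}|_{M_1}=1$. The only differences are that you first rewrite the support as $U(F)I$ rather than factoring $ik$ directly, and that you spell out the smoothness and compact-support checks the paper leaves implicit.
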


\begin{proof}
The function is well-defined because $U \cap \barr{P} = \{1\}$. It is right $I$-invariant. Indeed, for $g = u\,i$ in the support ($u\in U(F)$, $i\in I\cap\barr{P}$) and $k\in I$, write $i\,k = u_0\,m\,\barr{u}$ with $u_0 \in U_{0+}$, $m \in M_1$, $\barr{u} \in \barr{U}_0$ (the factorization~\eqref{eq:iwahori} of $ik\in I$). Then $g\,k = (u\,u_0)\,(m\,\barr{u})$ again lies in $U(F)\cdot(I\cap\barr{P})$, and $\mathrm{ch}_I^\psi(g\,k) = \psi(u\,u_0) = \psi(u) = \mathrm{ch}_I^\psi(g)$, since $\psi$ is trivial on $I\cap U = U_{0+}$. The function lies in $V_0$ since $U(F)\cdot(I\cap\barr{P})$ lies in the open cell. For~(2), a point $m\,\barr{u}$ with $m \in M(F)$ and $\barr{u} \in \barr{U}(F)$ lies in the support $U(F)\cdot(I\cap\barr{P})$ only if $m \in M_1$ and $\barr{u} \in I\cap\barr{U}$, by uniqueness of the open-cell factorization. Hence $\mathcal{J}(\mathrm{ch}_I^\psi)$ vanishes off~$M_1$. For $m \in M_1$ the integrand is $1$ exactly on $I\cap\barr{U}$ and $\delta_{\barr{P}}(m)^{1/2} = 1$ since $M_1$ is compact. So $\mathcal{J}(\mathrm{ch}_I^\psi) = \mathrm{ch}_{M_1}$.
\end{proof}

\begin{proposition}\label{prop:ranksum}
$T_w \cdot \mathrm{ch}_I^\psi = (-1)^{\ell(w)}\,\mathrm{ch}_I^\psi$ for all $w \in W_0$.
\end{proposition}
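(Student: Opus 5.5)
Since $T_{w}=T_{s_1}\cdots T_{s_k}$ for a reduced expression $w=s_1\cdots s_k$ in $W_0$ (Iwahori--Matsumoto relations, as $\ell$ is additive along reduced words of $W_0\subset\widetilde W$), it suffices to prove $T_s\cdot\mathrm{ch}_I^\psi=-\mathrm{ch}_I^\psi$ for each simple reflection $s=s_\alpha$, $\alpha\in\Delta$.

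\textbf{Reduction to two values.} Fix $\alpha$. By \eqref{eq:Haction} and Lemma~\ref{lem:bruhat}(1),
\[
(T_s\cdot\mathrm{ch}_I^\psi)(g)=\sum_{y\in U_{-\alpha,0}/U_{-\alpha,0+}}\mathrm{ch}_I^\psi(g\,y\,\dot s).
\]
This function lies in $V^I$. Since $\mathrm{ch}_I^\psi$ is supported on $U(F)I$, Lemma~\ref{lem:bruhat}(2) with $w=1$ and the Iwasawa decomposition \eqref{eq:iwasawa} show that $T_s\cdot\mathrm{ch}_I^\psi$ is supported on $U(F)I\cup U(F)\dot sI$. (Here $U(F)\,w\,IsI$ meets $U(F)I$ only for $w\in\{1,s\}$.) So $T_s\cdot\mathrm{ch}_I^\psi$ is determined by its values at $1$ and at $\dot s$, and we must show these are $-1$ and $0$.

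At $g=\dot s$ we have $\dot s y\dot s\in \dot s U_{-\alpha,0}\dot s^{-1}\cdot\dot s^2\subseteq U_{\alpha,0}\,M_1$, since $\dot s^2\in N_{K_v}(S)$ has trivial image in $W_0$, hence lies in $M\cap K_v=M_1$. The sum is therefore $\sum_y\psi(\dot s y\dot s^{-1})$. As $y$ ranges over $U_{-\alpha,0}/U_{-\alpha,0+}$, the element $\dot s y\dot s^{-1}$ ranges over $U_{\alpha,0}/U_{\alpha,0+}$. So the sum is a complete sum of a nontrivial character of $\mathsf U_\alpha(k_F)$, which vanishes. The only care needed is that $\psi$ is well defined on that quotient, which holds because $\psi$ is trivial on $U_{0+}$.

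\textbf{The value at $1$.} For $y\in U_{-\alpha,0+}$, the element $y\dot s$ lies in $I\dot s I$, which is disjoint from $U(F)I$ since $s\neq1$. For the remaining $y$, reduce modulo $K_v^+$ into the rank-one subgroup of $\mathsf G_v(k_F)$ attached to $s_\alpha$ (Fact~\ref{fact:finite}). The image of $y\dot s$ lies in the big cell $\mathsf U_\alpha\barr{\mathsf B}$, so we can write it as $\barr u(y)\,\barr b(y)$. Because $I$ is the preimage of $\barr{\mathsf B}(k_F)$, this gives $\mathrm{ch}_I^\psi(y\dot s)=\psi(\barr u(y))$. Hence
\[
(T_s\cdot\mathrm{ch}_I^\psi)(1)=\sum_{\barr y\in\mathsf U_{-\alpha}(k_F)\smallsetminus\{1\}}\psi\bigl(\barr u(\barr y)\bigr),
\]
a sum computed entirely in the rank-one finite group. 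Its value does not depend on the choice of central quotient, so we may pass to the simply connected cover $\Res_{k_\alpha/k_F}G'$ and treat the two cases separately.

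\emph{Case R.} Take $\barr y=\begin{pmatrix}1&0\\ x&1\end{pmatrix}$, $x\in k_\alpha^\times$, and $\dot s=\begin{pmatrix}0&1\\-1&0\end{pmatrix}$. Then $\barr y\dot s=\begin{pmatrix}1&-x^{-1}\\0&1\end{pmatrix}\barr b$ with $\barr b$ lower triangular. As $x$ runs over $k_\alpha^\times$, so does $-x^{-1}$. The sum becomes $\sum_{t\neq0}\psi_\alpha(t)=-1$, using that $\psi$ is nontrivial on $\mathsf U_\alpha$.

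\emph{Case NR.} This is the main obstacle. Realize $\mathrm{SU}_3$ over $k'/k$ (with $k=k_\alpha$) with respect to the antidiagonal hermitian form. Then $\mathsf U_{-\alpha}=\{\barr y(a,b):N$-type relation $b+\barr b=-a\barr a\}$ (in lower unitriangular form), a group of order $q_\alpha^3$. I expect the factorization $\barr y(a,b)\dot s\in\mathsf U_\alpha\barr{\mathsf B}$ to require $b\neq0$ and to send $(a,b)\mapsto(a',b')$ with $a'=\pm a\barr b^{-1}$ up to units (a birational involution, as in $\mathrm{SL}_2$). Elements with $b=0$ force $a=0$ and are excluded. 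Since $\psi$ factors through $\mathsf U_\alpha/\mathsf U_{2\alpha}\cong k'$ (Lemma~\ref{lem:vertex}: nontrivial on the $\barr\alpha$-part), the sum becomes
\[
\sum_{(a',b')\neq(0,0)}\psi_0(a'),
\]
where $\psi_0$ is a nontrivial character of $(k',+)$. The map to $a'$ is a bijection from the nonidentity elements onto the nonidentity elements of $\mathsf U_\alpha$. For each $a'$ the fibre has $q_\alpha$ elements, namely the solutions $b'$ of $\mathrm{Tr}\,b'=-N a'$. Removing the identity, the sum is
\[
q_\alpha\sum_{a'\in k'}\psi_0(a')-1=-1.
\]
The key steps to check carefully are the explicit matrix factorization in $\mathrm{SU}_3$ and that $(a,b)\mapsto$ the $\mathsf U_\alpha$-component is indeed a bijection of nonidentity elements. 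I will verify the latter by applying $\dot s$-conjugation and the uniqueness of the big-cell decomposition.
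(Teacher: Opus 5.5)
Your proof is correct. Its skeleton is the paper's: reduction to a simple $s$, support of $T_s\cdot\mathrm{ch}_I^\psi$ on $U(F)I\cup U(F)\dot sI$, the vanishing complete character sum at $\dot s$, and reduction of the value at $1$ to a sum over $\mathsf U_{-\alpha}(k_F)\smallsetminus\{1\}$. The difference is in how you evaluate that last sum.

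The paper writes explicit open-cell factorizations in $\mathrm{SL}_2$ and $\mathrm{SU}_3$, and splits Case~NR into three subcases contributing $0$, $q_\alpha-1$ and $-q_\alpha$. Your operative step is instead the claim that $\barr y\mapsto\barr u(\barr y)$ is a bijection $\mathsf U_{-\alpha}(k_F)\smallsetminus\{1\}\to\mathsf U_\alpha(k_F)\smallsetminus\{1\}$. This holds in general:
\begin{enumerate}
\item Existence of $\barr u(\barr y)$ comes from the rank-one Bruhat decomposition $\mathsf U_\alpha\mathsf M\mathsf U_{-\alpha}\sqcup\dot s\,\mathsf M\mathsf U_{-\alpha}$ of $Z_{\mathsf G_v}(\mathsf S_\alpha)(k_F)$, a group that contains $\dot s$.
\item Injectivity follows from $\dot s^{-1}\barr y_1^{-1}\barr y_2\dot s\in\mathsf U_\alpha\cap\barr{\mathsf B}=\{1\}$.
\item $\barr u(\barr y)\neq 1$, since otherwise $\dot s\in\barr{\mathsf B}$.
\item Surjectivity follows from $|\mathsf U_\alpha(k_F)|=|\mathsf U_{-\alpha}(k_F)|$.
\end{enumerate}
The sum is then $\sum_{u\neq 1}\psi(u)=-1$, by orthogonality for the nontrivial character $\psi$ of the finite group $\mathsf U_\alpha(k_F)$. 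Once the bijection is stated, you need none of the following: the Case~R/NR classification, matrices, fibre counts, central covers, or independence of the Weyl representative. The hedged formula $a'=\pm a\barr b^{-1}$ can simply be dropped; it is in fact $a'=a\barr b^{-1}$, matching the paper's $u_+(e\barr d^{-1},d^{-1})$.

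What the paper's route buys is explicit factorizations, which it also uses in the proof of Lemma~\ref{lem:bruhat}\,(2), and a visible cancellation $(q_\alpha-1)-q_\alpha=-1$. Yours is shorter and uniform over all rank-one quotients.

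Two minor slips. First, the sentence ``$I\dot sI$ is disjoint from $U(F)I$'' is false. Lemma~\ref{lem:bruhat}\,(2) with $w=1$ gives $U(F)IsI\supseteq U(F)I$, so $IsI$ meets $U(F)$. The correct reason the trivial coset contributes $0$ is that $y\dot s=\dot s(\dot s^{-1}y\dot s)\in\dot s\,U_{\alpha,0+}\subseteq\dot sI$, which misses $U(F)I$ by \eqref{eq:iwasawa}. Second, in Case~R the unipotent factor is $\bigl(\begin{smallmatrix}1&x^{-1}\\0&1\end{smallmatrix}\bigr)$, not the one with $-x^{-1}$; this does not change the sum.
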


\begin{proof}
It suffices to prove $T_{s}\cdot\mathrm{ch}_I^\psi = -\mathrm{ch}_I^\psi$ for each simple reflection $s = s_\alpha$. The function $T_{s}\cdot\mathrm{ch}_I^\psi$ is again left $(U,\psi)$-equivariant and right $I$-invariant, so by the Iwasawa decomposition~\eqref{eq:iwasawa} it is determined by its values at the representatives $\dot{w}$, $w\in\widetilde{W}$, which we evaluate by means of~\eqref{eq:Haction} and Lemma~\ref{lem:bruhat}. Because $\mathrm{ch}_I^\psi$ is supported on $U(F)\cdot(I\cap\barr{P})\subseteq U(F)\,I$, and $U(F)\,w\,IsI$ meets only the cells $ws$ and~$w$ (Lemma~\ref{lem:bruhat}\,(2)), the value $T_{s}\cdot\mathrm{ch}_I^\psi(w)$ can be non-zero only when one of $ws,\,w$ equals $1$ in $\widetilde{W}$. This forces $w \in \{1, s\}$.

At $w = s$, represented by $\dot{s}^{-1}$, conjugation by $\dot{s}$ carries $U_{-\alpha}$ to $U_\alpha$, and
\[
T_{s}\cdot\mathrm{ch}_I^\psi(s) = \sum_{y \,\in\, U_{-\alpha,0}/U_{-\alpha,0+}}
\mathrm{ch}_I^\psi(\dot{s}^{-1} y\,\dot{s})
= \sum_{u\in\mathsf{U}_\alpha(k_F)} \psi(u) = 0,
\]
by non-degeneracy of $\psi$ on $\mathsf{U}_\alpha$, where the $y$ are the coset representatives of Lemma~\ref{lem:bruhat}\,(1). At $w = 1$ we must evaluate
\begin{equation}\label{eq:mainsum}
T_{s}\cdot\mathrm{ch}_I^\psi(1) = \sum_{y \,\in\, U_{-\alpha,0}/U_{-\alpha,0+}}
\mathrm{ch}_I^\psi(y\,\dot{s}).
\end{equation}
Since $\mathrm{ch}_I^\psi$ takes the value $1$ at $w = 1$ and $0$ at $w = s$, the identity $T_s\cdot\mathrm{ch}_I^\psi = -\mathrm{ch}_I^\psi$ is equivalent to the sum~\eqref{eq:mainsum} being~$-1$, which we now establish.

The representatives $y\,\dot{s}$ lie in the special maximal parahoric $K_v$, and we reduce the sum to the finite quotient $\mathsf{G}_v(k_F)$. The support of $\mathrm{ch}_I^\psi$ meets $K_v$ in $U_0\cdot(I\cap\barr{P}) = U_0\,I$ (by~\eqref{eq:iwahori}), which is the full preimage of $\mathsf{U}(k_F)\,\barr{\mathsf{B}}(k_F)$ under $K_v \to \mathsf{G}_v(k_F)$, since $I$ is the preimage of $\barr{\mathsf{B}}(k_F)$ and $U_0$ surjects onto $\mathsf{U}(k_F)$ (Section~\ref{ssec:Iwahori}). There $\mathrm{ch}_I^\psi(u\,i) = \psi(u)$ with $u \in U_0$, which depends only on the reduction $\barr{u} \in \mathsf{U}(k_F)$ of~$u$. Hence $\mathrm{ch}_I^\psi|_{K_v}$ descends to the function $\barr{\phi}$ on $\mathsf{G}_v(k_F)$ supported on $\mathsf{U}(k_F)\,\barr{\mathsf{B}}(k_F)$ with $\barr{\phi}(\barr{u}\,\barr{b}) = \psi(\barr{u})$. The sum~\eqref{eq:mainsum} is therefore the finite sum
\[
\sum_{y \in \mathsf{U}_{-\alpha}(k_F)} \barr{\phi}(y\,\dot{s}),
\]
computed inside the rank-one subgroup of $\mathsf{G}_v$ generated by $\mathsf{U}_{\pm\alpha}$. This subgroup falls into Case~R or Case~NR of the classification of Section~\ref{sec:finite}. Each term is insensitive to the choice of~$\dot{s}$, since any two elements of $\mathsf{G}_v(k_F)$ normalizing $\mathsf{S}$ and inducing $s_\alpha$ differ by an element of $\mathsf{M}(k_F) \subseteq \barr{\mathsf{B}}(k_F)$, which $\barr{\phi}$ absorbs on the right. We treat the two cases in turn.

\smallskip
\noindent\textbf{Case R.} Here $\mathsf{U}_{-\alpha}(k_F) \cong k_\alpha$, and the computation is the split-case computation of Chan--Savin (\cite[proof of Lem.~4.3]{CS}). The rank-one subgroup is a central quotient of $\Res_{k_\alpha/k_F}\mathrm{SL}_2$, and neither operation affects the sum. Indeed, Weil restriction is invisible on $k_F$-points, and the central quotient is an isomorphism on the root groups. We compute in $\mathrm{SL}_2$ over $k_\alpha$ with the Weyl representative $\dot{s} = \bigl(\begin{smallmatrix}0&1\\-1&0\end{smallmatrix}\bigr)$. The standard identity
\[
\begin{pmatrix}1&0\\ \tau&1\end{pmatrix}
\begin{pmatrix}0&1\\ -1&0\end{pmatrix}
= \begin{pmatrix}1& \tau^{-1}\\ 0&1\end{pmatrix}
\begin{pmatrix}\tau^{-1}&0\\ 0&\tau\end{pmatrix}
\begin{pmatrix}1&0\\ -\tau^{-1}&1\end{pmatrix}
\qquad (\tau \in k_\alpha^\times)
\]
puts $y\,\dot{s}$ in the open cell with unipotent part $x_\alpha(\tau^{-1})$, while $\tau = 0$ gives the closed cell and contributes~$0$. Hence
\[
\sum_{y\in\mathsf{U}_{-\alpha}(k_F)} \barr{\phi}(y\,\dot{s})
= \sum_{\tau\in k_\alpha^\times} \psi(\tau^{-1})
= -1,
\]
since $\psi$ is a non-trivial additive character of~$k_\alpha$.

\smallskip
\noindent\textbf{Case NR.} Here the rank-one subgroup is a central quotient of $\Res_{k_\alpha/k_F}\mathrm{SU}_3$, where $\mathrm{SU}_3$ is the quasi-split special unitary group over $k_\alpha$ attached to a quadratic extension $k_\alpha'/k_\alpha$ of finite fields. As in Case~R, neither operation affects the sum, so we compute in $\mathrm{SU}_3(k_\alpha)$. Write $q_\alpha = |k_\alpha|$, so $|k_\alpha'| = q_\alpha^2$. Let $c\mapsto\barr{c}$ be the non-trivial element of $\Gal(k_\alpha'/k_\alpha)$ and $\Tr = \Tr_{k_\alpha'/k_\alpha}$. Every identity below is an identity between explicit matrices with entries in~$k_\alpha'$. Realize $\mathrm{SU}_3 = \{g\in\mathrm{SL}_3(k_\alpha') : \barr{g}^{\,T}Jg = J\}$ with the anti-diagonal hermitian form $J = \bigl(\begin{smallmatrix}0&0&1\\0&1&0\\1&0&0\end{smallmatrix}\bigr)$. Its root groups
\[
u_+(a,b) = \begin{pmatrix}1&a&b\\0&1&-\barr a\\0&0&1\end{pmatrix}
\ (b+\barr b+a\barr a = 0), \qquad
u_-(e,d) = \begin{pmatrix}1&0&0\\-\barr e&1&0\\d&e&1\end{pmatrix}
\ (d+\barr d+e\barr e = 0)
\]
are the groups $\mathsf{U}_{\pm\alpha}$ of Fact~\ref{fact:finite}, with $\mathsf{U}_{2\alpha} = \{u_+(0,b): b+\barr b = 0\}$. We note that $\mathsf{U}_{-\alpha}(k_F)$ is a Heisenberg-type group of order $q_\alpha^3$ and, for each fixed $e$, the admissible $d$ form a coset of the trace-zero line, hence of size $q_\alpha$. The character $\psi$ factors through the coordinate $u_+(a,b)\mapsto a$, a non-trivial character of $\mathsf{U}_\alpha/\mathsf{U}_{2\alpha}\cong k_\alpha'$. The diagonal torus $\{m(a) = \mathrm{diag}(\barr a^{-1},\,\barr a a^{-1},\,a) : a\in k_\alpha'^\times\}$ lies in $\mathsf{M}$, and we take the Weyl representative $\dot{s} = \bigl(\begin{smallmatrix}0&0&1\\0&-1&0\\1&0&0\end{smallmatrix}\bigr) \in\mathrm{SU}_3$. Direct multiplication gives
\[
u_-(e,d)\,\dot{s} =
\begin{pmatrix}0&0&1\\0&-1&-\barr e\\1&-e&d\end{pmatrix}.
\]
When $u_-(e,d)\,\dot{s}$ admits an open-cell factorization $u_+\cdot m\cdot u_-$ with $u_+\in\mathsf{U}_\alpha$, $m\in\mathsf{M}$, $u_-\in\mathsf{U}_{-\alpha}$, the corresponding term of~\eqref{eq:mainsum} equals $\psi(u_+)$, and when it admits none the term vanishes, since $\barr{\phi}$ is supported on $\mathsf{U}(k_F)\,\barr{\mathsf{B}}(k_F)$. We split the sum into three cases.

\emph{(i) $e=0$, $d=0$.} Here $u_-(0,0)\,\dot{s} = \dot{s}$ does not lie in the open cell, and the term vanishes.

\emph{(ii) $e=0$, $d\neq 0$ (so $d+\barr d = 0$).} Then $\barr d^{-1} = -d^{-1}$, and
\[
u_-(0,d)\,\dot{s} = \begin{pmatrix}0&0&1\\0&-1&0\\1&0&d\end{pmatrix}
= u_+(0,\,d^{-1})\cdot
\underbrace{\begin{pmatrix}\barr d^{-1}&0&0\\0&-1&0\\0&0&d\end{pmatrix}}_{m(d)\in\mathsf{M}}
\cdot\;u_-(0,\,d^{-1}).
\]
The $\mathsf{U}_\alpha$-component $u_+(0,d^{-1})$ lies in $\mathsf{U}_{2\alpha}$, on which $\psi$ is trivial, so each of the $q_\alpha - 1$ such terms contributes~$1$.

\emph{(iii) $e\neq 0$ (so $d+\barr d = -e\barr e\neq 0$ and $d\neq 0$).} Using the relation $d+\barr d+e\barr e = 0$, one checks the open-cell factorization
\[
u_-(e,d)\,\dot{s}
= u_+\bigl(e\barr d^{-1},\, d^{-1}\bigr)\cdot
\underbrace{\begin{pmatrix}\barr d^{-1}&0&0\\0&\barr d\,d^{-1}&0\\0&0&d\end{pmatrix}}_{m(d)\in\mathsf{M}}
\cdot\;u_-\bigl(-e d^{-1},\, d^{-1}\bigr),
\]
where $u_+(e\barr d^{-1}, d^{-1})\in\mathsf{U}_\alpha$ and $u_-(-ed^{-1}, d^{-1})\in\mathsf{U}_{-\alpha}$ amount to $(d+\barr d+e\barr e)/(d\barr d) = 0$. As $\psi$ is trivial on $\mathsf{U}_{2\alpha}$, the term is $\psi(e\barr{d}^{-1})$, and the contribution is
\[
S = \sum_{\substack{e\in k_\alpha'^\times\\ d:\; d+\barr{d}=-e\barr{e}}}
\psi\bigl(e\barr{d}^{-1}\bigr).
\]
The map $(e,d)\mapsto c := e\barr{d}^{-1}$ is exactly $q_\alpha$-to-one onto $k_\alpha'^\times$. Indeed, for any fixed $c \neq 0$, one has $e = c\barr{d}$, and dividing the constraint $d+\barr{d} = -c\barr{c}\,d\barr{d}$ by $d\barr d$ turns it into $\Tr(d^{-1}) = -c\barr{c}$, which has exactly $q_\alpha$ solutions. Hence
\[
S = q_\alpha \sum_{c\in k_\alpha'^\times} \psi(c) = q_\alpha(0-1) = -q_\alpha.
\]
Again, here we use the fact that $\psi$ is a non-trivial additive character of $k_\alpha'$.

\smallskip
The three contributions sum to $0 + (q_\alpha - 1) + (-q_\alpha) = -1$. In both Case~R and Case~NR the sum~\eqref{eq:mainsum} equals~$-1$, hence $T_s\cdot\mathrm{ch}_I^\psi = -\mathrm{ch}_I^\psi$.
\end{proof}

%%% ------------------------------------------------------------------
\section{Main theorem and consequences}\label{sec:main}
%%% ------------------------------------------------------------------

Throughout this section, $\psi$ is a depth-zero non-degenerate character of $U(F)$ at a special vertex~$v$ (Definition~\ref{def:psi}) and $I$ is the Iwahori attached to $(P,v)$ as in Section~\ref{ssec:Iwahori}.

\begin{corollary}\label{cor:main}
Let $V = \ind_U^G\psi$. Then:
\begin{enumerate}[\upshape(1)]
\item $V^I$ is a free $\AAA$-module of rank one, generated by $\mathrm{ch}_I^\psi$.
\item $V^I \cong \HH \otimes_{\HH_{W_0}} \sgn$ as $\HH$-modules.
\end{enumerate}
\end{corollary}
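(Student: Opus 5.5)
For part (1), I would combine Theorem~\ref{thm:BC}, Proposition~\ref{prop:Jacquet} and Lemma~\ref{lem:generator}. The composite
\[
V^I \xrightarrow{\ \sim\ } V_{\barr{U}}^{M_1} \xrightarrow{\ \mathcal{J}\ } C_c^\infty(M(F))^{M_1} \cong \CC[\Omega_M]
\]
is an isomorphism of $\AAA$-modules. Here $\AAA$ acts on the right-hand side through the embedding of Remark~\ref{rem:comm}, that is, by translation by $\Omega_M \cong M(F)/M_1$. By Lemma~\ref{lem:generator}(2), $\mathrm{ch}_I^\psi \in V^I$ maps to $\mathrm{ch}_{M_1}$. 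This element corresponds to $1\in\CC[\Omega_M]$, which freely generates $\CC[\Omega_M]$ over itself. Transporting back, $\mathrm{ch}_I^\psi$ freely generates $V^I$ over $\AAA$. The one point to verify is that the $\AAA$-module structure on $V^I$ from the inclusion $\AAA\subseteq\HH$ matches the translation action after $\mathcal{J}$. This is exactly the content of Theorem~\ref{thm:BC} as stated, together with the $M(F)$-equivariance of $\mathcal{J}$ (the normalized Jacquet action, with $\delta_{\barr P}^{1/2}$ built in).

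For part (2), I would define $\Phi\colon \HH\otimes_{\HH_{W_0}}\sgn \to V^I$ by $h\otimes 1\mapsto h\cdot\mathrm{ch}_I^\psi$. It is well defined because Proposition~\ref{prop:ranksum} gives $T_w\cdot\mathrm{ch}_I^\psi = \sgn(T_w)\,\mathrm{ch}_I^\psi$. Hence $hT_w\cdot\mathrm{ch}_I^\psi = h\cdot(\sgn(T_w)\mathrm{ch}_I^\psi)$, so the balanced map $(h,z)\mapsto z\,h\cdot\mathrm{ch}_I^\psi$ factors through the tensor product. The map $\Phi$ is $\HH$-linear by construction.

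To see that $\Phi$ is bijective, I would use the Bernstein basis from Section~\ref{ssec:Hecke}. Since $\HH$ is free as a right $\HH_{W_0}$-module on $\{\Theta_\lambda\}$, the source is a free $\AAA$-module of rank one on $1\otimes 1$, with $\AAA$-basis $\{\Theta_\lambda\otimes 1\}$. Now $\Phi(\Theta_\lambda\otimes1)=\Theta_\lambda\cdot\mathrm{ch}_I^\psi$, and by part (1) these elements form an $\AAA$-basis of $V^I$. So $\Phi$ is an $\AAA$-linear map sending a free generator to a free generator, hence an isomorphism. It is also $\HH$-linear, so it is an isomorphism of $\HH$-modules.

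The only step needing care is the compatibility in part (1): that the abstract $\AAA$-action via $\Theta_\lambda$ corresponds under Theorem~\ref{thm:BC} to translation by $\lambda$, rather than by $\lambda^{-1}$ or by $\lambda$ with a modulus twist. My plan is to rely on Remark~\ref{rem:comm}, which identifies the embedding with Bushnell's normalized embedding attached to $\barr P$, so that the normalized Jacquet module matches. Any mismatch would be an automorphism of $\CC[\Omega_M]$, for instance $\lambda\mapsto\lambda^{-1}$ or a twist by a positive character. Such an automorphism still sends the free generator $1$ to a unit multiple of a free generator, so the freeness conclusion is unaffected even if the normalization is off.
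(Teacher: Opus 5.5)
Your proposal is correct and is essentially the paper's argument: part (1) comes from the identification $V^I\cong\CC[\Omega_M]$, under which $\mathrm{ch}_I^\psi\mapsto\mathrm{ch}_{M_1}$, and part (2) extends the $\sgn$-line of Proposition~\ref{prop:ranksum} to a map $\HH\otimes_{\HH_{W_0}}\sgn\to V^I$, which is an isomorphism because an $\AAA$-map between free rank-one $\AAA$-modules sending generator to generator is bijective. The only slip is in wording: $\{\Theta_\lambda\otimes 1\}$ and $\{\Theta_\lambda\cdot\mathrm{ch}_I^\psi\}$ are $\CC$-bases, while the $\AAA$-bases are the single generators $1\otimes 1$ and $\mathrm{ch}_I^\psi$, and this does not affect the argument.
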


\begin{proof}
Part~(1) follows from Lemma~\ref{lem:generator}\,(2) and the identification $V^I \cong \CC[\Omega_M]$ of Section~\ref{ssec:Jacquet}. For~(2), Proposition~\ref{prop:ranksum} gives an element of $\Hom_{\HH_{W_0}}(\sgn, V^I)$ sending $1 \mapsto \mathrm{ch}_I^\psi$. By Frobenius reciprocity it extends to an $\HH$-module map $\HH\otimes_{\HH_{W_0}}\sgn \to V^I$ with $1\otimes 1 \mapsto \mathrm{ch}_I^\psi$. Both sides are free $\AAA$-modules of rank one, and the map sends generator to generator, so it is an isomorphism.
\end{proof}

The proof uses no hypothesis on the center of $G$ and no hypothesis on the residue characteristic. The depth-zero non-degenerate characters of $U(F)$ may form several orbits under $S(F)$, and Corollary~\ref{cor:main} holds for each.

\medskip
\noindent\textbf{Genericity criterion.} For a smooth representation $\pi$ of $G(F)$ define
\[
S(\pi) = \bigl\{v \in \pi^I : T_w\cdot v = (-1)^{\ell(w)}\,v
\ \text{for all } w \in W_0\bigr\}.
\]
Let $\pi_{U,\psi}$ be the unnormalized \emph{twisted Jacquet module}, the maximal quotient of $\pi$ on which $U(F)$ acts by~$\psi$.

The Iwahori $(I,\mathbf{1})$ is a type for a finite set of Bernstein components (\cite[Thm.~4.8]{Morris}), so the category $\mathrm{Rep}_I(G)$ of smooth representations generated by their $I$-fixed vectors is equivalent, via $V\mapsto V^I$, to the category of $\HH$-modules (\cite[Thm.~4.3(ii)]{BK}). These components are the connected components of $\operatorname{Spec} Z[\HH]$, where $Z[\HH]$ is the center of~$\HH$ (Bernstein~\cite{Bernstein}). Since $Z[\HH] = \AAA^{W_0} = \CC[\Omega_M]^{W_0}$ (\cite[\S 6]{Rostami}), these components are indexed by the $W_0$-orbits of characters of the torsion subgroup $\Omega_{M,\mathrm{tor}}$. This index set is stable under $\chi\mapsto\chi^{-1}$, and the
contragredient carries the component of $\chi$ to that of $\chi^{-1}$, so $\mathrm{Rep}_I(G)$ is closed under the
contragredient.

\begin{corollary}\label{cor:genericity}
Let $\pi\in\mathrm{Rep}_I(G)$. Then the canonical map $S(\pi) \to \pi_{U,\psi}$, obtained by composing $S(\pi)\hookrightarrow\pi$ with $\pi \twoheadrightarrow \pi_{U,\psi}$, is a bijection. In particular, $\pi$ is $\psi$-generic ($\pi_{U,\psi}\neq 0$) if and only if $S(\pi)\neq 0$, equivalently, $\pi^I$ contains a nonzero vector transforming by the sign character~$\sgn$ of $\HH_{W_0}$.
\end{corollary}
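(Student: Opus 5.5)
The approach is the standard Frobenius reciprocity argument for the Gelfand--Graev representation, transported through the equivalence $\mathrm{Rep}_I(G)\simeq \HH$-mod. The first step is to identify the twisted Jacquet module with a Hom space. For any smooth $\pi$, the dual of $\pi_{U,\psi}$ is $\Hom_U(\pi,\psi)$. By Frobenius reciprocity for compact induction, applied to the contragredient (or equivalently using $\Hom_G(\ind_U^G\psi,\tilde\pi)\cong\Hom_U(\psi,\tilde\pi|_U)$ and duality), this becomes a Hom space out of $V=\ind_U^G\psi$.

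To avoid dualizing, I would use instead the more direct form
\[
\Hom_G(V,\pi)\;\cong\;\Hom_{\HH}(V^I,\pi^I)\;\cong\;\Hom_{\HH}(\HH\otimes_{\HH_{W_0}}\sgn,\pi^I)\;\cong\;S(\pi).
\]
The first isomorphism holds because $V^I$ generates the $\mathrm{Rep}_I(G)$-part of $V$ and $\pi\in\mathrm{Rep}_I(G)$, using the Bushnell--Kutzko equivalence; the projection of $V$ onto its $I$-component is a direct summand. The second is Corollary~\ref{cor:main}(2), and the third is Frobenius reciprocity for $\HH_{W_0}\subset\HH$. Explicitly, $\Phi\in\Hom_G(V,\pi)$ corresponds to $\Phi(\mathrm{ch}_I^\psi)\in S(\pi)$.

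The next step is to compare this with the twisted Jacquet module. Here I would apply everything to the contragredient $\tilde\pi$, which lies in $\mathrm{Rep}_I(G)$ by the remark preceding the statement. Then
\[
\Hom_G(V,\tilde\pi)=\Hom_G(\pi,\widetilde{V})=\Hom_G(\pi,\mathrm{Ind}_U^G\psi^{-1}),
\]
and this is $\Hom_U(\pi,\psi^{-1})$, possibly with $\psi$ replaced by its inverse depending on conventions. That gives only a dual statement. Since the claim is a bijection $S(\pi)\to\pi_{U,\psi}$ for the canonical map, I would rather prove it directly.

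For the direct proof, write $\pi$ as a quotient of a direct sum of copies of $\HH$-induced objects, i.e.\ use the functor $\pi\mapsto\pi^I$ and exactness. Both $\pi\mapsto S(\pi)=\Hom_{\HH_{W_0}}(\sgn,\pi^I)$ and $\pi\mapsto\pi_{U,\psi}$ are exact on $\mathrm{Rep}_I(G)$. The former is exact because $\HH_{W_0}$ is semisimple, being a deformation of $\CC[W_0]$ with $q_s>1$, so $\sgn$ is projective. The latter is exact because twisted coinvariants over a union of compact groups are exact. Both commute with direct sums, and the canonical map is natural. Hence it suffices to check bijectivity on the progenerator $\pi=\ind_I^G\mathbf 1$, whose $I$-invariants are $\HH$. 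Then $S(\pi)=\HH\,e_{\sgn}$, where $e_{\sgn}$ is the sign idempotent, which exists by semisimplicity. By Corollary~\ref{cor:main}(1) this is a free $\AAA$-module of rank one. On the other side, $(\ind_I^G\mathbf 1)_{U,\psi}$ is spanned by images of the $\mathrm{ch}_{gI}$, and through the Iwasawa decomposition~\eqref{eq:iwasawa} it identifies with functions on $U\backslash G/I$ twisted by $\psi$.

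The main obstacle is this explicit verification that $\HH e_{\sgn}\to(\ind_I^G\mathbf 1)_{U,\psi}$ is bijective. I would dualize it: the map is the transpose of
\[
(\ind_U^G\psi^{-1})^{I}\text{-type pairing}\;\longrightarrow\;\Hom_{\HH_{W_0}}(\sgn,\cdot),
\]
and bijectivity then reduces, via the pairing between $\ind_I^G\mathbf 1$ and $V^I$, to Corollary~\ref{cor:main} for $\psi^{-1}$. Concretely, $V^I\cong\HH\otimes_{\HH_{W_0}}\sgn$ is dual to $e_{\sgn}\HH$. I expect bookkeeping of $\psi$ versus $\psi^{-1}$ and left versus right modules to be the delicate part.
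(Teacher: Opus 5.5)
\paragraph{Where the proposal breaks down.} Your reduction to $P=\ind_I^G\mathbf 1$ is sound. The functors $\pi\mapsto S(\pi)=e_{\sgn}\pi^I$ and $\pi\mapsto\pi_{U,\psi}$ are exact and commute with direct sums, the canonical map is natural, and every $\pi\in\mathrm{Rep}_I(G)$ has a presentation by sums of copies of $P$ inside $\mathrm{Rep}_I(G)$. (Freeness of $S(P)=\HH e_{\sgn}\cong\HH\otimes_{\HH_{W_0}}\sgn$ over $\AAA$ comes from the Bernstein basis, not from Corollary~\ref{cor:main}\,(1).)

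The gap is the step you yourself flag. Bijectivity of $\HH e_{\sgn}\to P_{U,\psi}$ is never established, and the dualization you sketch fails as stated:
\begin{itemize}
\item The algebraic dual of $P_{U,\psi}$ is $\Hom_U(P,\psi)=(\Ind_U^G\psi)^I$. These are functions on all relevant cells $U(F)\dot wI$ with no support condition, and Corollary~\ref{cor:main} says nothing about them.
\item $V^I$ has countable dimension, so it cannot be the dual of $e_{\sgn}\HH$.
\end{itemize}
To dualize correctly you must pass through the contragredient, $\Hom_G(P,\Ind_U^G\psi)\cong\Hom_G(\ind_U^G\psi^{-1},\tilde P)$, so that compact induction sits in the source. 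You would then still have to prove that the resulting chain of isomorphisms is the transpose of the canonical map. That is exactly the paper's argument, run for arbitrary $\pi$, with the transpose identification taken from Chan--Savin's explicit computation. On that route the reduction to $P$ gains nothing, and the ``bookkeeping'' you defer is the whole content.

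\paragraph{How to close the gap without duality.} Put $A\phi(g)=\int_{U(F)}\psi(u)^{-1}\phi(g^{-1}u)\,du$ for $\phi\in P$.
\begin{itemize}
\item The $U(F)$-orbits $I\backslash I\dot w^{-1}U(F)$ on $I\backslash G(F)$ are open, with compact stabilizers $U(F)\cap\dot wI\dot w^{-1}$. Hence $A$ induces an isomorphism $P_{U,\psi}\xrightarrow{\sim}(\ind_U^G\psi^{-1})^I$.
\item $A$ carries left convolution by $\HH$ on $P$, which commutes with $G(F)$ and preserves $S(P)$, to the usual $\HH$-action.
\item For $h\in P^I=\HH$, unfold the integral over the support $U(F)I$ of $\mathrm{ch}_I^{\psi^{-1}}$, using $\vol(I)=\vol(U(F)\cap I)=1$. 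This gives
\[ A h=\mathrm{ch}_I^{\psi^{-1}}*\hat h=h\cdot\mathrm{ch}_I^{\psi^{-1}}. \]
\item Proposition~\ref{prop:ranksum} for $\psi^{-1}$ gives $e_{\sgn}\cdot\mathrm{ch}_I^{\psi^{-1}}=\mathrm{ch}_I^{\psi^{-1}}$.
\end{itemize}
Together these show that the canonical map on $S(P)=\HH e_{\sgn}\cong\HH\otimes_{\HH_{W_0}}\sgn$, followed by $A$, is precisely $h\otimes 1\mapsto h\cdot\mathrm{ch}_I^{\psi^{-1}}$. That is the isomorphism of Corollary~\ref{cor:main} for $\psi^{-1}$.

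With this observation your route becomes a complete proof. It bypasses both the contragredient and the transpose identification on which the paper relies.
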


\begin{proof}
We argue as in Chan--Savin~\cite[Corollary~4.5]{CS}. We indicate the points requiring care in the present generality. Let $\tilde\psi = \psi^{-1}$, again depth-zero non-degenerate at~$v$, so Corollary~\ref{cor:main} applies to $\ind_U^G\tilde\psi$. It suffices to show the dual map $(\pi_{U,\psi})^* \to S(\pi)^*$ is a bijection, and as in \emph{loc.\ cit.}
\begin{align*}
(\pi_{U,\psi})^*
&\cong \Hom_G(\pi,\, \Ind_U^G\psi)
\cong \Hom_G(\ind_U^G\tilde\psi,\, \tilde\pi) \\
&\cong \Hom_\HH\bigl(\HH\otimes_{\HH_{W_0}}\sgn,\, \tilde\pi^I\bigr)
\cong \Hom_{\HH_{W_0}}(\sgn,\, \tilde\pi^I) \cong S(\tilde\pi) \cong S(\pi)^*,
\end{align*}
by the duality $(\ind_U^G\tilde\psi)^\vee \cong \Ind_U^G\psi$ and Frobenius reciprocity. The explicit computation in the proof of \emph{loc.\ cit.} shows that this composite realizes the dual of the canonical map $S(\pi) \to \pi_{U,\psi}$. It applies verbatim here, with the quotient measure on $U(F)\backslash G(F)$ induced by the normalizations $\vol(I) = \vol(U(F)\cap I) = 1$.

Two points deserve mention. First, $\ind_U^G\tilde\psi$ does not lie in $\mathrm{Rep}_I(G)$, so for the third isomorphism one interposes the Bernstein projection $p_I$ onto $\mathrm{Rep}_I(G)$. Since $\tilde\pi\in\mathrm{Rep}_I(G)$ (closure under the contragredient, above), one has $\Hom_G(\ind_U^G\tilde\psi,\tilde\pi) = \Hom_G(p_I\,\ind_U^G\tilde\psi,\tilde\pi)$, while $(p_I\,\ind_U^G\tilde\psi)^I = (\ind_U^G\tilde\psi)^I$.

Second, we explain the last isomorphism. The Iwahori idempotent splits $\pi = \pi^I\oplus\pi(I)$, so $\tilde\pi^I = (\pi^I)^*$ with no admissibility hypothesis. For any smooth representation $\sigma$ of $G(F)$ one has $S(\sigma) = e_{\sgn}\,\sigma^I$, where
\[
e_{\sgn} = \Bigl(\sum_{w\in W_0} q_w^{-1}\Bigr)^{-1}\sum_{w\in W_0}(-1)^{\ell(w)}\,q_w^{-1}\,T_w
\]
is the sign idempotent of $\HH_{W_0}$, satisfying $T_s\,e_{\sgn} = -e_{\sgn}$. Let $\iota$ be the anti-involution of $\HH$ defined by $\iota(f)(x) = f(x^{-1})$. We note $\iota(e_{\sgn}) = e_{\sgn}$. Each $f \in \HH$ acts on $\tilde\pi^I = (\pi^I)^*$ as the transpose of $\pi(\iota(f))$. Hence $S(\tilde\pi) = e_{\sgn}\,\tilde\pi^I = (e_{\sgn}\,\pi^I)^* = S(\pi)^*$.
\end{proof}

\medskip
\noindent\textbf{Uniqueness of the isomorphism.} Any $\HH$-module automorphism of $\HH\otimes_{\HH_{W_0}}\sgn$ is multiplication by a unit of $Z[\HH]$. If $G$ is semisimple with $\Omega_M$ torsion-free (for instance, when $G$ is split semisimple or simply connected), then $Z[\HH]^\times = \CC^\times$ and the isomorphism of Corollary~\ref{cor:main}\,(2) is unique up to a non-zero scalar. These statements are proved as in the uniqueness theorem of Chan--Savin \cite[\S 4.3]{CS}, with the lattice $X$ there replaced by $\Omega_M$ and the commutation relations by the Bernstein relations of \cite[\S 5.4]{Rostami}. We omit the details.

\begin{remark}\label{rem:PGL2D}
Torsion in $\Omega_M$ obstructs uniqueness even for semisimple~$G$. For $G = \mathrm{PGL}_2(D)$, with $D$ a central division algebra over $F$ of degree $d \ge 2$, the minimal Levi gives $\Omega_M \cong (\ZZ\oplus\ZZ)/\langle(d,d)\rangle$ with $W_0 = S_2$ swapping the coordinates. The $W_0$-fixed torsion class $t = [(1,1)]$, of order~$d$, makes $\Theta_t$ a non-scalar unit of $Z[\HH]$.
\end{remark}

\appendix

%%% ------------------------------------------------------------------
\section{Comparison with Solleveld--Opdam}\label{app:Sol}
%%% ------------------------------------------------------------------

We compare our explicit generator with the general framework of
Solleveld--Opdam \cite{SO26}. Its principal-series calculation and
Whittaker normalization come from Solleveld
\cite[Lemma~3.1 and Theorem~2.7]{Sol25}. Put $V=\ind_U^G\psi$, and let
$M^1$ be the subgroup
\[
M^1=
\bigcap_{\chi\in X_F^*(M)}
\ker\bigl(|\chi|_F\colon M(F)\longrightarrow\mathbb R_{>0}\bigr),
\]
where $X_F^*(M)$ is the group of $F$-rational algebraic characters of
$M$. We note that $M^1$ is also
the subgroup generated by all compact subgroups.

Recall that $\Omega_M=M(F)/M_1$.  The inclusion $M^1\subset M(F)$
identifies
\[
M^1/M_1=\Omega_{M,\mathrm{tor}}.
\tag{A.1}
\]
Indeed, every compact subgroup has finite image in the discrete group
$\Omega_M$.  Conversely, if $mM_1$ has finite order in $\Omega_M$, the
inverse image of the finite cyclic group generated by $mM_1$ is a compact subgroup of $M(F)$ containing~$m$.

Let $\displaystyle \Omega_{M,\mathrm{tor}}^\vee =\Hom(\Omega_{M,\mathrm{tor}},\CC^\times)$. View $\tau\in\Omega_{M,\mathrm{tor}}^\vee$ as a character of $M^1$
trivial on $M_1$.  Choose a unitary extension $\chi_\tau$ to $M(F)$ and put
$\mathfrak s_\tau=[M,\chi_\tau]_G$.  Different extensions differ by an
unramified character and determine the same inertial class.  Since
$U\cap M=\{1\}$ and $\chi_\tau$ is one-dimensional,
$\mathfrak s_\tau$ is simply generic in the sense of \cite[\S 6 and
Appendix~A]{SO26}.  The associated progenerator is
\[
\Pi_\tau
=I_P^G\bigl(\ind_{M^1}^{M(F)}\tau\bigr).
\]
Theorem~6.1(b), equation~(8.8), Corollary~8.7, and Theorem~A.1 of
\cite{SO26} identify $\Hom_{G(F)}(\Pi_\tau,V)$ with the module induced
from the Steinberg representation of the finite part of the corresponding
extended affine Hecke algebra. Let $\xi_\tau\in\Hom_{G(F)}(\Pi_\tau,V)$
be the vector corresponding to the standard tensor $1\otimes1$ in this
induced module. We call $\xi_\tau$ the distinguished vector attached
to~$\tau$.

Second adjointness, Frobenius reciprocity and
Proposition~\ref{prop:Jacquet} identify
{\small
\[
\Hom_{G(F)}(\Pi_\tau,V)
\cong\Hom_{M(F)}\bigl(\ind_{M^1}^{M(F)}\tau,V_{\barr U}\bigr)
\cong\Hom_{M^1}\bigl(\tau,C_c^\infty(M(F))\bigr).
\tag{A.2}
\]
}
With right translation on $C_c^\infty(M(F))$, the canonical vector on
the right is
\[
F_\tau(m)=
\begin{cases}
\tau(m),&m\in M^1,\\
0,&m\notin M^1.
\end{cases}
\tag{A.3}
\]
The Jacquet isomorphism underlying this construction and the map
$\mathcal J$ of Proposition~\ref{prop:Jacquet} may differ by a non-zero
scalar arising from the choice of Haar measure. This is a single scalar independent of
$\tau$.  We normalize the comparison so that $\xi_\tau$ corresponds
to $F_\tau$.  Without this normalization, all the
vectors below are multiplied by the same non-zero scalar.

Let
\[
\kappa\colon V^I\xrightarrow{\ \sim\ }
V_{\barr U}^{M_1}\xrightarrow[\mathcal J]{\ \sim\ }
C_c(M(F)/M_1)
\tag{A.4}
\]
be the composite of Theorem~\ref{thm:BC} and
Proposition~\ref{prop:Jacquet}.  For
$\omega\in\Omega_{M,\mathrm{tor}}$, write $\delta_\omega$ for the
characteristic function of the corresponding coset in $M(F)/M_1$.  Under
(A.2)--(A.4), the vector $\xi_\tau$ determines the unique
$v_\tau\in V^I$ satisfying
\[
\kappa(v_\tau)
=f_\tau
:=\sum_{\omega\in\Omega_{M,\mathrm{tor}}}
\tau(\omega)\delta_\omega.
\tag{A.5}
\]

\begin{proposition}\label{prop:Sol-recovery}
With the normalization above,
\[
\mathrm{ch}_I^\psi
=
\frac{1}{|\Omega_{M,\mathrm{tor}}|}
\sum_{\tau\in\Omega_{M,\mathrm{tor}}^\vee}v_\tau.
\tag{A.6}
\]
\end{proposition}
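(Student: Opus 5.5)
Since $\kappa$ in (A.4) is a linear isomorphism $V^I\to C_c(M(F)/M_1)$, it suffices to apply $\kappa$ to both sides of (A.6) and compare the images. We carry this out in three steps.

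\emph{The left side.} By Lemma~\ref{lem:generator}\,(2), $\mathcal J(\mathrm{ch}_I^\psi)=\mathrm{ch}_{M_1}$. The first arrow of (A.4) is the Borel--Casselman map of Theorem~\ref{thm:BC}, namely the canonical projection $V^I\to V_{\barr U}^{M_1}$. Hence $\kappa(\mathrm{ch}_I^\psi)=\mathcal J(\mathrm{ch}_I^\psi)=\mathrm{ch}_{M_1}$. In the notation of (A.5), this is $\delta_{1}$, the characteristic function of the identity coset of $M(F)/M_1\cong\Omega_M$, which lies in the torsion part $\Omega_{M,\mathrm{tor}}$.

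\emph{The right side.} By (A.5) and linearity,
\[
\kappa\Bigl(\frac{1}{|\Omega_{M,\mathrm{tor}}|}\sum_{\tau}v_\tau\Bigr)
=\sum_{\omega\in\Omega_{M,\mathrm{tor}}}\Bigl(\frac{1}{|\Omega_{M,\mathrm{tor}}|}\sum_{\tau\in\Omega_{M,\mathrm{tor}}^\vee}\tau(\omega)\Bigr)\delta_\omega .
\]
Since $\Omega_{M,\mathrm{tor}}$ is a finite abelian group, orthogonality of characters gives $\sum_\tau\tau(\omega)=|\Omega_{M,\mathrm{tor}}|$ if $\omega=1$ and $0$ otherwise. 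The right-hand side therefore collapses to $\delta_1=\mathrm{ch}_{M_1}$. This is Fourier inversion on $\Omega_{M,\mathrm{tor}}$, and it uses (A.1) to view the $\delta_\omega$ as functions supported in $M^1/M_1$.

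\emph{Consistency of (A.5).} The main point needing care is that (A.5) is correctly derived from (A.3). The function $F_\tau$ is $\tau$ on $M^1$, and $M^1$ is the union of the cosets $\omega M_1$ with $\omega\in\Omega_{M,\mathrm{tor}}$ by (A.1). Thus $F_\tau=\sum_\omega\tau(\omega)\delta_\omega$ as an element of $C_c(M(F)/M_1)$. We must also check that, under (A.2), evaluating a homomorphism $\tau\to C_c^\infty(M(F))$ at $1$ lands in the $M_1$-invariants. It does, because $\tau$ is trivial on $M_1$. Combined with the normalization chosen after (A.3), this identifies $v_\tau$ as $\kappa^{-1}(F_\tau)$. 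Taking (A.5) as given, the proof reduces to the orthogonality computation above. Injectivity of $\kappa$ then yields (A.6).
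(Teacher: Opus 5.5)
Your proposal is correct and follows the paper's proof: apply the isomorphism $\kappa$, use $\kappa(\mathrm{ch}_I^\psi)=\mathrm{ch}_{M_1}=\delta_1$ from Lemma~\ref{lem:generator}\,(2), collapse $\frac{1}{|\Omega_{M,\mathrm{tor}}|}\sum_\tau f_\tau$ to $\delta_1$ by character orthogonality on the finite abelian group $\Omega_{M,\mathrm{tor}}$, and conclude by injectivity of $\kappa$. Your extra check of (A.5) against (A.3) is harmless but not needed, since the paper takes (A.5) as the defining property of $v_\tau$.
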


\begin{proof}
Write $\Omega :=\Omega_{M,\mathrm{tor}}$. By (A.5) and character
orthogonality,
\[
\frac{1}{|\Omega|}\sum_{\tau\in\Omega^\vee}f_\tau
=\sum_{\omega\in\Omega}
\left(
\frac{1}{|\Omega|}
\sum_{\tau\in\Omega^\vee}\tau(\omega)
\right)\delta_\omega
=\delta_1.
\]
The claim follows from $\kappa(v_\tau)=f_\tau$ and
$\kappa(\mathrm{ch}_I^\psi)=\mathrm{ch}_{M_1}=\delta_1$
(Lemma~\ref{lem:generator}\,(2)).
\end{proof}

When $G$ is unramified, its minimal Levi is an unramified torus and
$\Omega_{M,\mathrm{tor}}=1$.  Thus $v_1=\mathrm{ch}_I^\psi$.  This is the case used in
\cite{Luo26}.  In general, Proposition~\ref{prop:Sol-recovery} assembles
the vectors $v_\tau$ into the explicit Iwahori generator. This
still leaves a comparison between the normalized affine-Hecke generators
and the $T_s$ in our conventions.
Proposition~\ref{prop:ranksum} avoids that comparison by proving
$T_s\cdot\mathrm{ch}_I^\psi=-\mathrm{ch}_I^\psi$ directly in $V^I$.

%%% ------------------------------------------------------------------

\end{document}